\numberwithin{equation}{section}
\DeclareMathOperator{\E}{\mathbb{E}}
\DeclareMathOperator{\Var}{Var}
\DeclareMathOperator*{\sign}{sign}
\DeclareMathOperator*{\diam}{diam}
\DeclareMathOperator*{\conv}{conv}
\renewcommand{\Pr}[2][]{\mathbb{P}_{#1} \left\{ #2 \rule{0mm}{3mm}\right\}}
\newcommand{\ip}[2]{\left\langle#1,#2\right\rangle}
\newcommand{\twonorm}[1]{\left\|#1\right\|_2}
\newcommand{\abs}[1]{|#1|}
\newcommand*{\del}{\partial}
\def \R {\mathbb{R}}
\def \XX {\mathcal{X}}
\def \e {\varepsilon}
\def \d {\delta}
\def \s {\sigma}
\def \tran {\mathsf{T}}
\def \< {\langle}
\def \> {\rangle}
\def \one {{\bf 1}}
\def \xlin {\widehat{x}_{\mathrm{lin}}}
\def \xhat {\widehat{x}}
\newtheorem{theorem}{Theorem}[section]
\newtheorem{proposition}[theorem]{Proposition}
\newtheorem{corollary}[theorem]{Corollary}
\newtheorem{lemma}[theorem]{Lemma}
\newtheorem{definition}[theorem]{Definition}
\theoremstyle{remark}
\newtheorem{remark}[theorem]{Remark}
\newtheorem*{claim}{Claim}
\begin{document}
\title{High-dimensional estimation with geometric constraints}
\thanks{Y.P. was partially supported by an NSF Postdoctoral Research Fellowship under award No. 1103909. R.V. was partially supported by NSF grants DMS 1161372, 1001829, 1265782 and USAF Grant FA9550-14-1-0009. E.Y. was partially supported by an NSF Postdoctoral Research Fellowship under award No. 1204311.}

\author{Yaniv Plan \and Roman Vershynin}
\address{Y. Plan and R. Vershynin, University of Michigan Department of Mathematics, 2074 East Hall, 530 Church Street, Ann Arbor, MI 48109}

\author{Elena Yudovina}
\address{E. Yudovina, University of Michigan Department of Statistics, 439 West Hall, 1085 S. University Ave., Ann Arbor, MI 48109}

\email{\{plan, romanv, yudovina\}@umich.edu}

\date{April 14, 2014}

\keywords{High-dimensional inference, semiparametric single-index model, compressed sensing, matrix completion, mean width, dimension reduction}
\subjclass[2010]{60D05 (primary), 94A12 (secondary)}

\begin{abstract}
Consider measuring a vector $x \in \R^n$ through the inner product with several measurement vectors, $a_1, a_2, \hdots, a_m$.  It is common in both signal processing and statistics to assume the linear response model $y_i = \ip{a_i}{x} + \e_i$, where $\e_i$ is a noise term.  However, in practice the precise relationship between the signal $x$ and the observations $y_i$ may not follow the linear model, and in some cases it may not even be known.  To address this challenge, in this paper we propose a general model where it is only assumed that {\em each observation $y_i$ may depend on $a_i$ only through $\ip{a_i}{x}$.} We do not assume that the dependence is known. This is a form of the semiparametric-single index model, and it includes the linear model as well as many forms of the \textit{generalized linear model} as special cases.  
We further assume that the signal $x$ has some structure, and we formulate this as a general assumption that $x$ belongs to some 
known (but arbitrary) feasible set $K \subseteq \R^n$. 
We carefully detail the benefit of using the signal structure to improve estimation.  The theory is based on the \textit{mean width} of $K$, a geometric parameter which can be used to understand its effective dimension in estimation problems.  We determine a simple, efficient two-step procedure for estimating the signal based on this model---a linear estimation followed by metric projection onto $K$.  
We give general conditions under which the estimator is minimax optimal up to a constant.  
This leads to the intriguing conclusion that in the high noise regime, an unknown non-linearity in the observations does not significantly reduce one's ability to determine the signal, even when the non-linearity may be non-invertible.  Our results may be specialized to understand the effect of non-linearities in compressed sensing.
\end{abstract}

\maketitle

\section{Introduction}
\label{sec: intro}

An important challenge in the analysis of high-dimensional data is to combine noisy observations---which individually give uncertain information---to give a precise estimate of a signal.  One key to this endeavor is to utilize some form of structure of the signal.  If this structure comes in a low-dimensional form, and the structure is aptly used, the resulting dimension reduction can find a needle of signal in a haystack of noise.  This idea is behind many of the modern results in \textit{compressed sensing} \cite{CSbook}.

To make this more concrete, suppose one is given a series of measurement vectors $a_i$, paired with observations $y_i$.  It is common to assume a linear data model $y_i = \ip{a_i}{x} + \e_i$, relating the response to an unknown signal, or parameter vector, $x$.  
However, in many real problems the linear model may not be justifiable, or even plausible---consider binary observations.  Such data can be approached with the \textit{semiparametric single index model}, in which one models the data as 
$$
y_i = f(\ip{a_i}{x} + \e_i)
$$ 
for some unknown function $f: \R \rightarrow \R$.  The goal is to estimate the signal $x$ despite the unknown non-linearity $f$.  In fact, one may take a significantly more general approach as described in the next section.


It is often the case that some prior information is available on the structure of the signal $x$. Structure may come in many forms; 
special cases of interest include manifold structure, sparsity, low-rank matrices and compressible images. 
We consider a general model, assuming only that 
$$
\text{$x \in K$ for some closed star-shaped%
\footnotemark
set $K \subset \R^n$.}
$$
\footnotetext{A set $K$ is called star-shaped if $\lambda K \subseteq K$ whenever $0 \leq \lambda \leq 1$.}
This includes cones and convex sets containing the origin.  We focus on three goals in this paper:  
\begin{enumerate}[\quad 1)]
  \item determine general conditions under which the signal can be well estimated;
  \item give an efficient method of estimation; 
  \item determine precisely what is gained by utilizing the feasible set $K$ that encodes signal structure.  
\end{enumerate}



Let us outline the structure of this paper. 
In the rest of this section, we carefully specify our model, describe the proposed estimator, and analyze its performance for general $K$; the main result is Theorem~\ref{thm: main} in Section~\ref{ssec: main result}.  
In Section~\ref{sec: examples of K}, we specialize our results to a number of standard feasible sets $K$, including sparse vectors and low-rank matrices.  In Section~\ref{sec: examples of y}, we specialize our results to specific versions of the semiparametric single-index model.  We include the linear model, the binary regression model, and a model with explicit non-linearity.  In Section~\ref{sec: optimality}, we discuss the optimality of the estimator.  We show that it is minimax optimal up to a constant under fairly general conditions.  In Section~\ref{sec: comparison to M^*} we illustrate the connection between our results and a deep classical estimate from \textit{geometric functional analysis}, called the \textit{low $M^*$ estimate}.  This estimate tightly controls the diameter of a random section of an arbitrary set $K$.  Section~\ref{sec: elena's section} gives an overview of the literature on the semiparametric single-index model, emphasizing the differences between the more geometric approach in this paper and the classical approaches used in statistics and econometrics.  We give some concluding remarks in Section~\ref{sec: discussion}. Sections~\ref{s: proofs}--\ref{sec: optimality proofs} contain the technical proofs. In Section~\ref{s: proofs} we give the proofs of the results from Section~\ref{sec: intro}, including our main result; in Section~\ref{s: thm: main whp} we state and prove a version of our main result which holds with high probability rather than in expectation; and in Section~\ref{sec: optimality proofs} we give proofs of the optimality results from Section~\ref{sec: optimality}.

\medskip
Throughout this paper, we use notation common in the compressed sensing literature. However, because we expect our results to be of interest to a wider statistical audience, we provide a dictionary of notation in Section~\ref{sec: elena's section}, Table~\ref{table}.

\subsection{Model}
\label{ssec: model}
Let $x \in \R^n$ be a fixed (unknown) \emph{signal} vector, and let $a_i \in \R^n$ be independent random measurement vectors. 
We assume that observations $y_i$ are independent real-valued random variables such that 
\begin{equation} \label{single-index model} 
\text{\em each observation $y_i$ may depend on $a_i$ only through $\ip{a_i}{x}$}.
\end{equation} 
In other words, we postulate that, given $\ip{a_i}{x}$, the observation $y_i$ and the measurement vector $a_i$ are conditionally independent. We are interested in recovering the signal vector $x$ using as few observations as possible. 
Furthermore, we will usually assume some \emph{a priori} knowledge about $x$ of the form $x \in K$ for some known set $K \subset \R^n$.  Note that the norm of $x$ is sacrificed in this model since it may be absorbed into the dependence of $y_i$ on $\ip{a_i}{x}$.  Thus, it is of interest to estimate $x$ up to a scaling factor.

Unless otherwise specified, we assume that $a_i$ are independent standard normal vectors in $\R^n$. We lift this assumption in several places in the paper: see the matrix completion problem in Section~\ref{ssec: low-rank}, and the lower bounds on all possible estimators in Section~\ref{sec: optimality}.  We make further suggestions of how this assumption may be generalized in Section~\ref{sec: discussion}.


\subsection{Linear estimation}
\label{ssec: linear estimation}

The first and simplest approach to estimation is to ignore the feasible set $K$ for a moment.  One may then employ the following {\em linear estimator}
$$
\xlin := \frac{1}{m} \sum_{i=1}^m y_i a_i.
$$
It is not difficult to see that $\xlin$ is an unbiased estimator of the properly scaled vector $x$,
and to compute the mean squared error. This is the content of the following proposition
whose proof we defer to Section~\ref{s: proofs}.

\begin{proposition}[Linear estimation]			\label{prop: linear estimation}
  Let $\bar{x} = x/\|x\|_2$. Then
  $$
  \E \xlin = \mu \bar{x} 
  \quad \text{and} \quad 
  \E \|\xlin - \mu \bar{x}\|_2^2 = \frac{1}{m} \Big[ \s^2 + \eta^2(n-1) \Big].
  $$
  Here
  \begin{equation}         \label{eq: mu sigma eta}
  \mu = \E y_1 \ip{a_1}{\bar{x}}, \quad \s^2 = \Var(y_1 \ip{a_1}{\bar{x}}), \quad \eta^2 = \E y_1^2.
  \end{equation}
\end{proposition}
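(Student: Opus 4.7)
The plan is to exploit the rotational invariance of the standard Gaussian, which allows us to split $a_1$ into a component along $\bar{x}$ and a component in $\bar{x}^{\perp}$. Writing
\[
a_1 = \ip{a_1}{\bar{x}}\,\bar{x} + g, \qquad g := P_{\bar{x}^\perp} a_1,
\]
we have $g$ standard normal in the $(n-1)$-dimensional hyperplane $\bar{x}^\perp$, and the two summands are independent. By the single-index assumption \eqref{single-index model}, $y_1$ depends on $a_1$ only through $\ip{a_1}{\bar{x}}$, hence $y_1$ is independent of $g$. This is the one structural step that drives everything; it is routine once set up, but is the conceptual heart of the argument.

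For the first identity, I would compute $\E[y_1 a_1]$ by splitting along the decomposition above:
\[
\E[y_1 a_1] = \E\!\bigl[y_1 \ip{a_1}{\bar{x}}\bigr]\,\bar{x} + \E[y_1\,g] = \mu\,\bar{x} + \E[y_1]\,\E[g] = \mu\,\bar{x},
\]
since $\E g = 0$. Because the pairs $(y_i, a_i)$ are i.i.d., averaging gives $\E \xlin = \mu \bar{x}$.

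For the second identity, since $\xlin - \mu\bar{x}$ is an average of $m$ i.i.d.\ mean-zero terms,
\[
\E \|\xlin - \mu\bar{x}\|_2^2 = \frac{1}{m}\,\E \|y_1 a_1 - \mu\bar{x}\|_2^2.
\]
Substituting the decomposition of $a_1$, the vector $y_1 a_1 - \mu \bar{x}$ splits orthogonally as $(y_1\ip{a_1}{\bar{x}} - \mu)\bar{x} + y_1 g$, so by the Pythagorean theorem,
\[
\E \|y_1 a_1 - \mu\bar{x}\|_2^2 = \E(y_1\ip{a_1}{\bar{x}} - \mu)^2 + \E[y_1^2 \|g\|_2^2].
\]
The first term equals $\sigma^2$ by definition of the variance. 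For the second, the independence of $y_1$ and $g$ established above gives $\E[y_1^2 \|g\|_2^2] = \E y_1^2 \cdot \E \|g\|_2^2 = \eta^2(n-1)$, since $\|g\|_2^2$ is $\chi^2_{n-1}$. Combining and dividing by $m$ yields the claimed formula.

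The only genuine obstacle is ensuring the independence of $y_1$ and the transverse component $g$; this requires citing both the rotational symmetry of the Gaussian (to get $\ip{a_1}{\bar{x}} \perp g$) and assumption \eqref{single-index model} (to transfer this independence from $\ip{a_1}{\bar{x}}$ to $y_1$). Everything else reduces to a Pythagorean decomposition and basic moment identities.
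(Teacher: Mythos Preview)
Your proposal is correct and follows essentially the same approach as the paper: the same orthogonal decomposition $a_1 = \ip{a_1}{\bar{x}}\bar{x} + g$, the same independence argument for $y_1 \perp g$ (which the paper formalizes via the contraction property of conditional independence), and the same moment computations. Your use of the pointwise Pythagorean identity is in fact slightly cleaner than the paper's version, which verifies $\E\ip{X}{Y}=0$ via independence even though $\ip{X}{Y}=0$ holds deterministically by orthogonality of $\bar{x}$ and $g$.
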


By rotation invariance of $a_i$, the parameters $\mu$, $\sigma$ and $\eta$ depend on the magnitude $\|x\|_2$ 
but not on the direction $\bar{x}=x/\|x\|_2$ of the unknown vector $x$ or on the number of observations $m$. 
These properties make it simple to compute or bound these parameters in many important cases, 
as will be clear from several examples below. For now, it is useful to think of these parameters as constants.

\medskip

The second part of Proposition~\ref{prop: linear estimation} essentially states that 
\begin{equation}         \label{eq: linear estimation asymp}
\Big[\E \|\xlin - \mu \bar{x}\|_2^2 \Big]^{1/2} \asymp \frac{1}{\sqrt{m}} \big[ \s + \eta \sqrt{n} \big].
\end{equation}
We can express this informally as follows:
\begin{equation}   \label{eq: linear essence}
  \text{{\em Linear estimation is accurate for $m = O(n)$ observations.}}
\end{equation}

\subsection{Projection onto the feasible set}

Although estimate \eqref{eq: linear estimation asymp} on the accuracy of linear estimation is sharp, 
it can be significantly improved if some {\em prior information} 
is available about the vector $x$. A rigorous way to encode the prior information 
would be to assume that $x \in K$ where $K \subset \R^n$ is some fixed, closed, and known 
{\em feasible set}.  
Because the scaling of $x$ may be absorbed into the semiparametric single-index model, it is actually more natural to assume that 
\begin{equation}         \label{eq: K}
\mu \bar{x} \in K
\end{equation}
in the notation of Proposition~\ref{prop: linear estimation}.\footnote{Passing between assumptions
$x \in K$ and $\mu \bar{x} \in K$ in practice should be painless by rescaling $K$, 
as the scaling factor $\mu$ is usually easy to estimate.  Further, if $K$ is a cone, then there is no difference between the assumptions.}

Recall that $\xlin$ is an unbiased estimator of $\mu \bar{x}$, a vector that lies in $K$.
So to incorporate the feasible set $K$ into estimation, a natural step 
is to {\em metrically project}
the linear estimator $\xlin$ onto $K$. In other words, we define
\begin{equation}         \label{eq: xhat}
\xhat = P_K(\xlin) = \arg\min_{z \in K} \|\xlin - z\|_2.
\end{equation}
As we will see shortly, this non-linear estimator outperforms the linear one, often by a 
big margin.

\subsection{Measuring the size of the feasible set by mean width}		\label{s: mean width}

The quality of the non-linear estimator \eqref{eq: xhat} 
should depend on the size of the feasible set $K$.
It turns out that there is a simple geometric notion that captures the size of $K$
for this purpose. This notion is the {\em local mean width}.
At first reading, one may replace it with a slightly simpler concept of {\em global} mean width.

\begin{definition}[Mean width]						\label{def: mean width}
  The (global, Gaussian) {\em mean width} of a subset $K \subset \R^n$ 
  is defined as 
  $$
  w(K) = \E \sup_{x,y \in K} \ip{g}{x-y},
  $$
  where $g \sim N(0, I_n)$. 
  The {\em local mean width} of a subset $K \subset \R^n$ is a function of scale $t \ge 0$,
  and is defined as
  $$
  w_t(K) = \E \sup_{x,y \in K, \, \|x-y\|_2 \le t} \ip{g}{x-y}.
  $$
\end{definition}

Note that $w_t(K) \le w(K)$ trivially holds for all $t$.
The concepts of mean width, both global and local, originate in geometric functional analysis 
and asymptotic convex geometry, see e.g. \cite{GM}.
Quantities equivalent to mean width appear also in stochastic processes 
under the name of $\gamma_2$ functional (see \cite{Talagrand}),
and in statistical learning theory under the name of Gaussian complexity (see \cite{BM}). 

More recently, the role of mean width (both local and global) was recognized in the area of signal recovery
\cite{MPT,PV_IEEE,PV_DCG,ALMT,ALPV}. To interpret these developments as well as this paper,
it is often helpful to think of the square of the mean width of the properly scaled feasible set $K$
as an {\em essential dimension} (as opposed to the algebraic dimension, which typically equals $n$).  A notable benefit of this method of measuring dimension is that it is robust to perturbations:  If $K$ is slightly increased, the mean width only changes slightly.

\subsection{Main result}
\label{ssec: main result}

\begin{theorem}[Non-linear estimation]				\label{thm: main}
  Let $\bar{x} = x/\|x\|_2$.
  Assume that $\mu \bar{x} \in K$ where $K$ is a fixed star-shaped closed subset of $\R^n$. 
  Then the non-linear estimator $\xhat$ defined in \eqref{eq: xhat} satisfies the following for every $t>0$:
  \begin{equation}         \label{eq: main}
  \E \|\xhat - \mu \bar{x}\|_2 \le t + \frac{2}{\sqrt{m}} \Big[ \s + \eta \, \frac{w_t(K)}{t} \Big].
  \end{equation}
  Here $\mu$, $\sigma$ and $\eta$ are the numbers defined in \eqref{eq: mu sigma eta}.
\end{theorem}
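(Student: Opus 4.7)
The plan is to combine three ingredients: the standard projection inequality, a star-shaped rescaling trick to pass from an arbitrary error scale $\|\xhat - \mu\bar x\|_2$ down to the scale $t$, and Gaussian-process bounds for the resulting local supremum. Write $x^* = \mu\bar x$, $h = \xhat - x^*$, $\d = \|h\|_2$, and $W = \xlin - x^*$, and note from Proposition~\ref{prop: linear estimation} that $\E W = 0$. Since $x^* \in K$ and $\xhat$ is the metric projection of $\xlin$ onto $K$, the inequality $\|\xhat - \xlin\|_2^2 \le \|x^* - \xlin\|_2^2$ expands to
\begin{equation*}
\d^2 \;\le\; 2\,\ip{W}{h}.
\end{equation*}

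Next I use the star-shaped assumption. For any $\l \in [0,1]$, $\l\xhat \in K$ and $\l x^* \in K$, so $\l h = \l\xhat - \l x^*$ belongs to $K - K$. If $\d > t$, set $\l = t/\d$: then $\l h$ lies in $(K-K) \cap tB_2^n$ and $\ip{W}{h} = (\d/t)\,\ip{W}{\l h}$, so the display above yields $\d \le (2/t)\sup_{u \in (K-K)\cap tB_2^n}\ip{W}{u}$. Combined with the trivial bound $\d \le t$ in the other case, this gives unconditionally
\begin{equation*}
\d \;\le\; t \;+\; \frac{2}{t}\,\sup_{u \in (K-K)\cap tB_2^n}\ip{W}{u},
\end{equation*}
and it remains to take expectations and bound the supremum by $(\s t + \eta\, w_t(K))/\sqrt{m}$.

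For that bound I split $W$ along the direction $\bar x$. Decomposing $a_i = \ip{a_i}{\bar x}\bar x + a_i^\perp$ with $a_i^\perp = P_{\bar x^\perp}a_i$, the single-index hypothesis makes $y_i$ independent of $a_i^\perp$, so
\begin{equation*}
W \;=\; W_\parallel\,\bar x \,+\, W_\perp,
\qquad
W_\parallel = \frac{1}{m}\sum_{i=1}^m \bigl(y_i\ip{a_i}{\bar x} - \mu\bigr),
\qquad
W_\perp = \frac{1}{m}\sum_{i=1}^m y_i\,a_i^\perp.
\end{equation*}
For $u \in (K-K)\cap tB_2^n$ we have $|\ip{\bar x}{u}| \le t$, so the parallel part contributes at most $t\,\E|W_\parallel| \le t\s/\sqrt m$ by Cauchy--Schwarz and the variance identity $\Var(W_\parallel)=\s^2/m$. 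For the orthogonal part, condition on $y = (y_1,\dots,y_m)$: the increments of the Gaussian process $u \mapsto \ip{W_\perp}{u}$ have variance $\tau^2\|P_{\bar x^\perp}(u-u')\|_2^2 \le \tau^2\|u-u'\|_2^2$ with $\tau^2 = m^{-2}\sum_i y_i^2$, so Sudakov--Fernique compares it to $\ip{\tau g}{\cdot}$ with $g \sim N(0,I_n)$, giving
\begin{equation*}
\E_{a\mid y}\sup_{u}\ip{W_\perp}{u} \;\le\; \tau\, w_t(K).
\end{equation*}
Averaging over $y$ and using $\E\tau \le (\E\tau^2)^{1/2} = \eta/\sqrt m$ finishes the estimate and yields the bound in the theorem.

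The mildly delicate step is the middle one: recognizing that star-shapedness of $K$ around the origin (not around $x^*$) is exactly what lets $\l h$ lie in $K - K$, and that local mean width, defined via pairs in $K$, is the right object to appear. Once that is in place, the projection inequality and the Sudakov--Fernique comparison (with the Gaussian decomposition tailored to the single-index assumption) are routine.
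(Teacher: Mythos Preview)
Your proof is correct and matches the paper's argument almost step for step: the same projection inequality $d^2 \le 2\ip{W}{h}$, the same star-shaped rescaling to localize at scale $t$ (which the paper packages as the monotonicity of $t \mapsto t^{-1}\|W\|_{K_t^\circ}$), and the same orthogonal split of $W$ along $\bar x$. The only cosmetic variation is in the orthogonal piece: the paper observes that conditionally on $y$, $W_\perp$ is exactly $\tau g_{x^\perp}$ for a standard Gaussian on the hyperplane and uses Jensen's inequality to pass from $g_{x^\perp}$ to $g_{\R^n}$, whereas you compare increment variances via Sudakov--Fernique; both routes yield the identical bound $\tau\,w_t(K)$.
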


\medskip
In Section~\ref{sec: optimality}, we will give general conditions under which the error achieved in this theorem is minimax optimal up to a constant factor---thus the theorem gives a precise, non-asymptotic, characterization of the benefit of using the signal structure $K$ to improve the estimation. 
In Section~\ref{s: thm: main whp} we will state a
version of Theorem~\ref{thm: main} whose conclusion is valid with high probability 
rather than in expectation.
In the next two sections we will simplify Theorem~\ref{thm: main} in special cases,
show how it is superior to Proposition~\ref{prop: linear estimation}, and illustrate it
with a number of examples.

\section{Feasible sets $K$: consequences and examples}
\label{sec: examples of K}

In this section we state a simpler version of Theorem~\ref{thm: main}, 
compare Theorem~\ref{thm: main} with Proposition~\ref{prop: linear estimation}, 
and illustrate it with
several classes of feasible sets $K$ that may be of interest in applications. 

\subsection{General sets: error bounds via global mean width}		\label{s: via global mean width}

Let us state a simpler conclusion of Theorem~\ref{thm: main}, in terms of global mean width $w(K)$
and without the parameter $t$.

Let $K$ be an arbitrary compact star-shaped subset of $\R^n$. 
Replacing in \eqref{eq: main} the local mean width $w_t(K)$ by the bigger quantity $w(K)$
and optimizing in $t$, we obtain
\begin{equation} \label{eq: global mean width bound}
\E \|\xhat - \mu \bar{x}\|_2 \le \frac{2 \sigma}{\sqrt{m}} + 2 \sqrt{2} \, \Big[ \frac{\eta \, w(K)}{\sqrt{m}} \Big]^{1/2}.
\end{equation}
While this conclusion is less precise than \eqref{eq: main}, it may be sufficient in some applications. 
We note the unusual rate $O(m^{-1/4})$ in the right hand side, which nevertheless can be sharp for some signal structures such as the $\ell_1$-ball.  See Sections \ref{ssec: l1}, \ref{ssec: reverse sudakov} and also \cite{RWY_Minimax}.

\subsection{General sets: comparison with linear estimation}

Let us compare the qualities of the linear and non-linear estimators.
Let $K$ be an arbitrary star-shaped closed subset of $\R^n$.
The definition of local mean width implies that
\begin{equation}         \label{eq: w by sqrt n}
w_t(K) \le w(t B_2^n) = t \, w(B_2^n) = t \E \|g\|_2 \le t (\E \|g\|_2^2)^{1/2} = t \sqrt{n}. 
\end{equation}
(Here and below, $B_2^n$ is the $n$-dimensional Euclidean ball.)
Substituting this bound into \eqref{eq: main} and letting $t \to 0$, we deduce 
the following estimate from Theorem~\ref{thm: main}:
$$
\E \|\xhat - \mu \bar{x}\|_2 \le \frac{2}{\sqrt{m}} \big[ \s + \eta \sqrt{n} \big].
$$
Note that this is the same upper bound as 
Proposition~\ref{prop: linear estimation} gives for linear estimation, up to an absolute constant factor.
We can express this conclusion informally as follows:
\begin{quote}
  {\em Projecting the linear estimator $\xlin$ onto the feasible set $K$ 
  can only improve the accuracy of estimation.}
\end{quote}
We will shortly see that such improvement is often significant.

\subsection{General cones}

Let $K$ be a fixed closed cone in $\R^n$, so $tK = K$ is satisfied for all $t \ge 0$. 
Then 
$$
w_t(K) = t \, w_1(K).
$$
Substituting this into \eqref{eq: main} and letting $t \to 0$, 
we can state Theorem~\ref{thm: main} in this case as follows.

\begin{theorem}[Estimation in a cone]				\label{thm: cone}
  Assume that $x \in K$ where $K$ is a fixed closed cone in $\R^n$.
  Let $\bar{x} = x/\|x\|_2$.
  Then
  \begin{equation}         \label{eq: cone}
  \E \|\xhat - \mu \bar{x}\|_2 \le \frac{2}{\sqrt{m}} \big[ \sigma + \eta \, w_1(K) \big].
  \end{equation}
  Here $\mu$, $\sigma$ and $\eta$ are the numbers defined in \eqref{eq: mu sigma eta}.
\end{theorem}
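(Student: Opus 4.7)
The plan is to derive Theorem~\ref{thm: cone} as a direct specialization of Theorem~\ref{thm: main}. The two ingredients needed are (i) the cone scaling identity $w_t(K) = t \cdot w_1(K)$, and (ii) the ability to take the limit $t \to 0^+$ in the bound \eqref{eq: main}.

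First I would verify the hypotheses of Theorem~\ref{thm: main}: a closed cone $K$ (in the sense $tK = K$ for $t\ge0$) is in particular star-shaped, and under the normalization convention in the paper $\mu \bar x = \mu x/\|x\|_2$ is a nonnegative multiple of $x \in K$, so $\mu\bar x \in K$ as required.

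Second, I would prove the scaling identity. Given any $x,y \in K$ with $\|x-y\|_2 \le t$, set $u = x/t$, $v = y/t$. Since $K$ is a cone, $u,v \in K$, and $\|u-v\|_2 \le 1$. Conversely, rescaling by $t$ sends pairs in $K$ with $\|u-v\|_2\le 1$ to pairs in $K$ with $\|x-y\|_2 \le t$. Hence
\begin{equation*}
w_t(K) = \E \sup_{x,y \in K,\,\|x-y\|_2\le t} \ip{g}{x-y} = \E \sup_{u,v \in K,\, \|u-v\|_2 \le 1} \ip{g}{t(u-v)} = t\, w_1(K).
\end{equation*}

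Third, inserting this into \eqref{eq: main} eliminates the dependence on $t$ from the bracketed term:
\begin{equation*}
\E \|\xhat - \mu \bar{x}\|_2 \;\le\; t + \frac{2}{\sqrt{m}}\bigl[\sigma + \eta\, w_1(K)\bigr] \quad \text{for every } t>0.
\end{equation*}
Letting $t \to 0^+$ yields \eqref{eq: cone}. There is no substantive obstacle: the only thing to check is the homogeneity of $w_t$ in $t$ when $K$ is a cone, which is immediate from the definition. The point of the theorem is really to highlight that in the cone setting the local mean width contributes no extra $t$-dependent loss, so the error bound becomes clean and free of any optimization parameter.
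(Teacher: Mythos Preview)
Your proposal is correct and follows essentially the same approach as the paper: the paper also establishes $w_t(K)=t\,w_1(K)$ from the cone property, substitutes into \eqref{eq: main}, and lets $t\to 0$. Your version is slightly more detailed in that you explicitly verify the star-shaped hypothesis and the condition $\mu\bar x\in K$, and spell out the rescaling argument for the local mean width, but the logic is identical.
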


To further simplify the bound \eqref{eq: cone}, note that term containing $w_1(K)$ 
essentially dominates there. This is based on the following observation.  

\begin{lemma}					\label{lem: w below}
  For a nonempty cone $K$, one has 
  $\E w_1(K) \ge \sqrt{2/\pi}$.
\end{lemma}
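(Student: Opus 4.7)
The plan is to exhibit a single admissible pair $(x,y)$ in the supremum defining $w_1(K)$ whose contribution already gives the desired lower bound in expectation. Since $K$ is a nonempty cone, it contains the origin and, assuming $K \neq \{0\}$, it contains some nonzero vector $v$; by the cone property $u := v/\|v\|_2 \in K$ as well. Thus both $u$ and $0$ lie in $K$, and $\|u - 0\|_2 = 1$, so the pair $(u,0)$ is feasible in the supremum defining $w_1(K)$. Symmetrically, $(0,u)$ is feasible.

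From these two admissible pairs I would obtain the pointwise lower bound
\[
\sup_{\substack{x,y \in K \\ \|x-y\|_2 \le 1}} \langle g, x-y\rangle \;\ge\; \max\bigl( \langle g, u\rangle, \, \langle g, -u\rangle\bigr) \;=\; |\langle g, u\rangle|
\]
for every realization of $g$. Taking expectations and using that $\langle g, u\rangle \sim N(0,1)$ because $u$ is a unit vector, I get
\[
w_1(K) \;\ge\; \E\, |\langle g, u\rangle| \;=\; \sqrt{2/\pi},
\]
which is the claimed bound. There is no real obstacle here; the only subtlety is the degenerate case $K = \{0\}$, which is presumably excluded by the convention that a ``nonempty cone'' in this paper refers to one containing a nonzero element (otherwise $w_1(K) = 0$ trivially and the statement fails). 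I would simply flag this convention at the beginning of the proof.
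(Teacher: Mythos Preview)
Your proof is correct and follows essentially the same approach as the paper: the paper observes that $K-K$ contains a line, so $(K-K)\cap B_2^n$ contains an antipodal pair $\{u,-u\}$ on the sphere, and then bounds $w_1(K)$ below by $\E|\langle g,u\rangle|=\sqrt{2/\pi}$. Your choice of the explicit pairs $(u,0)$ and $(0,u)$ in $K$ is exactly what produces those antipodal differences, and your remark about the degenerate cone $K=\{0\}$ is an appropriate caveat that the paper leaves implicit.
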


\begin{proof}
Note that $K-K$ contains a line, so $(K-K) \cap B_2^n$ contains a pair 
of antipodal points on the unit sphere. Therefore $w_1(K) = w( (K-K) \cap B_2^n )$ 
is bounded below by the first absolute moment of the standard normal distribution, 
which equals $\sqrt{2/\pi}$.
\end{proof}

Using Lemma~\ref{lem: w below}, we see that \eqref{eq: cone}
  implies that 
  \begin{equation}         \label{eq: cone simplified}
  \E \|\xhat - \mu \bar{x}\|_2 \le \gamma \, \frac{w_1(K)}{\sqrt{m}}, 
  \quad \text{where} \quad 
  \gamma = \sqrt{2\pi} \, \s + 2\eta. 
  \end{equation}
  
\medskip

In particular, we arrive at the following informal conclusion:
\begin{quote}
  {\em Linear estimation followed by projection onto the feasible cone $K$ 
  is accurate for $m = O(w_1(K)^2)$ observations.}
\end{quote} 

\begin{remark}[Essential dimension]
  This result becomes especially transparent if we think of $w_1(K)^2$ as 
  the {\em essential dimension} of the cone $K$. 
  A good estimation is then guaranteed for {\em the number of observations proportional 
  to the essential dimension} of $K$. 
  This paradigm manifested itself in a number of recent results in the area of signal recovery 
  \cite{MPT,PV_IEEE,PV_DCG,ALMT,ALPV}. 
  Among these we especially note \cite{ALMT} where the results are explicitly stated in terms of 
  a ``statistical dimension'', a very close relative of $w_1(K)$. 
\end{remark}

\begin{remark}[Projecting onto $S^{n-1}$]
Since magnitude information about $x$ is irrecoverable in the semiparametric model, and $K$ has a conic, scale-invariant structure in this subsection, it is natural to rephrase our theorem in terms of the estimation of $\bar{x}$, unscaled.  One may then normalize $\xhat$ to come to the following conclusion:
\begin{equation}
\twonorm{\frac{\xhat}{\twonorm{\xhat}} - \bar{x}} \leq \frac{2 \gamma}{\mu} \cdot \, \frac{w_1(K)}{\sqrt{m}}.
\end{equation}
Indeed, this follows because
\[\twonorm{\frac{\xhat}{\twonorm{\xhat}} - \bar{x}} \leq \twonorm{\frac{\xhat}{\twonorm{\xhat}} - \frac{\xhat}{\mu}} + \twonorm{\frac{\xhat}{\mu} - \bar{x}} \leq 2 \twonorm{\frac{\xhat}{\mu} - \bar{x}}\]
where the last step follows since $\xhat/\twonorm{\xhat}$ is the closest point in $S^{n-1}$ to $\xhat/\mu$.
\end{remark}

A quick computation similar to \eqref{eq: w by sqrt n} shows that the essential dimension of a cone is always bounded by its algebraic dimension. Thus the non-linear estimator $\xhat$ outperforms the linear estimator $\xlin$ discussed in \eqref{eq: linear essence}, and the improvement is dramatic in cases where $w_1(K)^2 \ll n$. We give examples of such situations below.

\subsection{The set of sparse vectors}
\label{ssec: sparse}

Sparsity is a key signal structure used in many modern applications.  For example, it is common to assume that only a small subset of coefficients are significant in a regression model.  As another example, images are generally compressible in some dictionary, i.e., up to a linear transformation an image is sparse.  The \textit{compressed sensing} model (see the book \cite{CSbook}) is based on sparsity, and one may use the results in this paper as a treatment of unknown non-linearities in compressed sensing.

The sparsity model takes $K = \{x \in \R^n: |\text{supp}(x)| \leq s\}$, i.e., $K$ is the set of vectors with at most $s$ nonzero entries.  Fortunately, projecting onto $K$ is computationally efficient: one only needs to retain the $s$ largest entries of the vector, replacing all other entries by 0.  This is referred to as \textit{hard thresholding}.  Thus, the projection estimator is quite amenable to large data.  

Further, when $s$ is significantly smaller than $n$, projecting onto $K$ gives a fruitful dimension reduction.  This is evident from the calculation
\[c \sqrt{s \log (2n/s)} \le w_1(K) \le C \sqrt{s \log (2n/s)}\]
which is given in \cite[Lemma 2.3]{PV_IEEE}.  Thus, $m \sim s \log (2n/s)$ observations are sufficient to estimate 
an $s$-sparse vector in $\R^n$.

{\bf Sparse in a dictionary.} In many applications, the signal may be sparse in a dictionary, rather than canonically sparse, e.g., images are often compressible in a wavelet basis.  In this case, the feasible set $K \subseteq \R^q$ is defined as 
$$
K = \{Dv: \abs{\text{supp}(v)} \leq s\}
$$ 
where the matrix $D \in \R^{q \times n}$ is referred to as a dictionary.  
A computation of the essential dimension of $K$ can be made 
through a slight generalization of \cite[Lemma 2.3]{PV_IEEE} (via a covering argument), which shows that once again
$$
w_1(K) \leq C \sqrt{s \log(2n/s)}.
$$
While projection onto $K$ can pose a challenge, we will consider signals which are approximately sparse in a dictionary in Section \ref{ssec: l1}; in this case projection can be done efficiently through convex programming.

\subsection{The set of low-rank matrices}
\label{ssec: low-rank}

The low-rank signal structure is prevalent in statistical applications, thus leading to the popularity of \textit{principal component analysis}; for many examples, see \cite{KV}.  Reconstructing a low-rank matrix from linear combinations of entries arises in various applications such as quantum state tomography or recommender systems \cite{recht2010guaranteed,gross2010quantum}.
Recently, there has been quite a bit of interest and new theory on the problem of \textit{matrix completion}: reconstruction of a low-rank matrix from a subset of its entries \cite{MC,tight_low_rank}, with binary non-linearities given special consideration \cite{1-bit_MC,Srebro}.  In this subsection, we first specialize our theory to the low-rank structure and then adjust our theory to the matrix completion model.

Here we take the cone $K$ to be the set of $d_1 \times d_2$ matrices with rank bounded by some small $r$.  Projection onto $K$ requires only taking the singular value decomposition and keeping the largest $r$ singular values and singular vectors. This procedure is referred to as {\em hard thresholding of singular values}. The mean width of $K$ is easy to estimate (see e.g.  \cite{PV_IEEE}); one has
$$
w_1(K) \leq \sqrt{2r (d_1 + d_2)}.
$$  
Thus, $m \sim r (d_1 + d_2)$ observations are sufficient to estimate a $d_1 \times d_2$ matrix of rank $r$.

\medskip

We now move away from the assumption of Gaussian measurement vectors in order to accommodate the matrix completion model.  In this case, one observes a random sample of the entries of the matrix $X$ (linear observations); that is, the measurement ``vectors'' (matrices) $a_i$ are uniformly distributed on the set of matrices with exactly one nonzero entry, and we take $y_i = \ip{a_i}{X}$.  Projection-based estimators have been considered for matrix completion in \cite{Montanari_MC,Kholtchinskii_MC,Chatterjee_MC}.  These papers take general models, which require a somewhat more complicated theory.  We consider a simple projection-based estimator and prove that it is quite accurate under a simple model, recovering best-known theory for this model. The proof takes much in common with known matrix completion theory; the purpose of putting it here is to emphasize its shortness under a simple model.  Further, this gives an intuition as to how to extend our results in general to non-Gaussian measurements.

\smallskip

Let us consider the following simple {\em matrix completion model.}  Take $d_1 = d_2 = d$ 
and consider a random subset $\Omega \subset \{1, \hdots, d\} \times \{1,\hdots, d\}$, 
which includes each entry $(i,j)$ with probability $p$, independently of all other entries. 
Let the observations $y_i$ give the entries of the matrix $X$ contained in $\Omega$. 
Further, assume that $X$ satisfies the following \textit{incoherence condition}:  
each entry of $X$ is bounded by a parameter $\zeta$.  

\smallskip

Let us specify the {\em estimator} of $X$. Define the mask matrix $\Delta_{\Omega}$ with $0,1$ entries by
$$
(\Delta_{\Omega})_{ij} = \one_{\{(i,j) \in \Omega\}}.
$$
Then the linear estimator discussed in Section~\ref{ssec: linear estimation}, $\frac{1}{m} \sum_{i=1}^m y_i a_i$, is naturally replaced with the Hadamard (entry-wise) product $\frac{1}{p} \Delta_{\Omega} \circ X$.  Clearly, this is a rough estimator of $X$, but nevertheless it is an unbiased estimator.  Fortunately, the projection step can significantly improve the rough estimator.  As above, project $\frac{1}{p} \Delta_{\Omega} \circ X$ onto the set of rank-$r$ matrices, and call this $\widehat{X}$.  

Under this model, we prove that the estimator is accurate, thereby recovering best-known theory for this model.

\begin{proposition}[Matrix completion accuracy]  \label{prop: MC}  Consider the model described above.  Let $m := \E |\Omega| = p d^2$, and assume that $m \geq d \log d$.  The estimator $\widehat{X}$ has the following average error
per entry:
$$
\frac{1}{d} \E \|\widehat{X} - X\|_F \leq C \sqrt{\frac{rd}{m}} \, \zeta.
$$
\end{proposition}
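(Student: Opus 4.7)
\textbf{Proof plan for Proposition~\ref{prop: MC}.}
The estimator $\widehat{X}$ is the metric projection onto the cone $K$ of rank-$r$ matrices of the unbiased ``rough'' estimator $Y := p^{-1}\Delta_\Omega \circ X$. The plan is to split the error into (i) a purely deterministic geometric bound coming from the projection, which exploits the fact that $\widehat{X}-X$ has rank at most $2r$, and (ii) a matrix concentration bound for the noise matrix $Y-X$.

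For the deterministic step, I would use a contraction-plus-duality argument. Since $X\in K$, optimality of $\widehat{X}$ gives $\|\widehat{X}-Y\|_F\le\|X-Y\|_F$. Expanding $\|\widehat{X}-X\|_F^2=\|(\widehat{X}-Y)+(Y-X)\|_F^2$ and substituting this inequality cancels the $\|Y-X\|_F^2$ terms and yields
$$\|\widehat{X}-X\|_F^2 \;\le\; 2\,\langle \widehat{X}-X,\;Y-X\rangle.$$
Both $\widehat{X}$ and $X$ have rank at most $r$, so $\widehat{X}-X$ has rank at most $2r$; trace duality $\langle A,B\rangle\le\|A\|_*\|B\|_{op}$ combined with $\|A\|_*\le\sqrt{\rank A}\,\|A\|_F$ then gives the clean deterministic bound
$$\|\widehat{X}-X\|_F \;\le\; 2\sqrt{2r}\,\|Y-X\|_{op}.$$
Taking expectations, the whole problem reduces to controlling $\E\|Y-X\|_{op}$.

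The noise matrix $Y-X$ has independent entries: $(Y-X)_{ij}$ equals $(1-p)X_{ij}/p$ with probability $p$ and $-X_{ij}$ with probability $1-p$. These entries are mean zero, bounded by $\zeta/p$ in absolute value, and have variance at most $\zeta^2/p$. Applying a matrix concentration inequality to this matrix with independent bounded entries, I expect a bound of the form
$$\E\|Y-X\|_{op} \;\lesssim\; \zeta\sqrt{d/p} \;+\; (\zeta/p)\sqrt{\log d}.$$
Substituting $p=m/d^2$ turns the two summands into $\zeta d\sqrt{d/m}$ and $\zeta d^2\sqrt{\log d}/m$. The hypothesis $m\ge d\log d$ forces the first term to dominate, yielding $\E\|Y-X\|_{op}\lesssim \zeta d\sqrt{d/m}$. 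Combining with the deterministic bound and dividing by $d$ gives the claimed $C\zeta\sqrt{rd/m}$.

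The main obstacle I anticipate is getting the log factors precisely right. A direct application of the matrix Bernstein inequality introduces an extra $\sqrt{\log d}$ in the leading term, which would demand the stronger hypothesis $m\gtrsim d(\log d)^2$. To recover the stated $m\ge d\log d$, I would invoke a sharper concentration result for matrices with independent entries---for instance the Bandeira--van Handel inequality, which places the $\sqrt{\log d}$ factor only on the subleading boundedness term, or a suitable non-commutative Khintchine estimate. Once such a bound is in hand, the rest of the argument is routine.
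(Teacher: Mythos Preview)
Your proposal is correct and shares the paper's two-step architecture (reduce to $\E\|Y-X\|_{op}$ via a rank argument, then concentration), but the execution differs in both steps. For the deterministic reduction, the paper does not go through the Frobenius projection inequality plus trace duality; instead it writes $\|\widehat X-X\|_F\le\sqrt{2r}\,\|\widehat X-X\|_{op}$ directly from the rank bound, then uses the triangle inequality $\|\widehat X-Y\|_{op}+\|Y-X\|_{op}\le 2\|Y-X\|_{op}$, exploiting that singular-value hard thresholding is also the best rank-$r$ approximation in \emph{operator} norm (Eckart--Young--Mirsky). Both routes land on the same bound $\|\widehat X-X\|_F\le 2\sqrt{2r}\,\|Y-X\|_{op}$. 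For the concentration step, rather than invoking Bandeira--van Handel on the heterogeneous matrix $Y-X$, the paper first symmetrizes, $\E\|\Delta_\Omega\circ X-pX\|\le 2\,\E\|R\circ\Delta_\Omega\circ X\|$ with a Rademacher matrix $R$, then applies the contraction principle (using $|X_{ij}|\le\zeta$) to strip off $X$ and reduce to the i.i.d.\ matrix $R\circ\Delta_\Omega$, for which Seginer's theorem gives $\E\|R\circ\Delta_\Omega\|\le C\sqrt{pd}$ once $p\ge(\log d)/d$. This symmetrization-plus-contraction route is more elementary and stays within classical empirical-process tools; your Bandeira--van Handel approach is equally valid, handles the heterogeneous entries in one shot, and---as you correctly anticipated---places the $\sqrt{\log d}$ only on the subleading term so that $m\ge d\log d$ suffices.
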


\begin{remark}[The benefit of projection]
Despite the non-Gaussian form of the measurements, the estimation error in matrix completion is proportional to $\sqrt{w_1(K)/m} = \sqrt{rd/m}$.
\end{remark}

\begin{remark}
As is evident from the proof below, if the observations are corrupted with i.i.d. $N(0, \nu^2)$ noise, the error bound becomes
$$
\frac{1}{d} \E \|\widehat{X} - X\|_F \leq C \sqrt{\frac{rd}{m}} \, (\zeta + \nu).
$$
When $\nu \geq \zeta$, this error is known to be minimax optimal up to a constant.  

The noise is easily incorporated and carried through the proof.  At the end it is simply necessary to replace the scaled Rademacher matrix $\zeta\cdot R$ by the sum of $\zeta\cdot R$ and a noise matrix.
\end{remark}  

\begin{proof}[Proof of Proposition \ref{prop: MC}.]  
 Note that rank($\widehat{X} - X) \leq 2r$. It follows that
\[\|\widehat{X} - X\|_F \leq \sqrt{2 r} \|\widehat{X} - X\|.\]
The operator norm may be bounded as follows:
\[\|\widehat{X} - X\| \leq \|\widehat{X} - p^{-1} \Delta_{\Omega} \circ X\| + \| p^{-1} \Delta_{\Omega} \circ X - X\| \leq 2 p^{-1} \|\Delta_{\Omega} \circ X - p X\|,\]
where the last step follows since $\widehat{X}$ is the closest rank-$r$ matrix to $p^{-1} \Delta_{\Omega} \circ X$ in operator norm.  The right-hand side is the operator norm of a matrix with independent, mean-zero entries and may be controlled directly in many ways.  To give an optimal result without log factors, one may use techniques from empirical process theory and random matrix theory.  By \textit{symmetrization} \cite[Lemma 6.3]{LT} followed by the \textit{contraction principle} \cite[Theorem 4.4]{LT},
\[
\E \|\Delta_{\Omega} \circ X - p X\| \leq 2 \E \| R \circ \Delta_{\Omega} \circ X\| \leq 2 \zeta \E \|R \circ \Delta_{\Omega}\|
\]
where $R$ is a matrix whose entries are independent Rademacher random variables (i.e. $R_{ij} = 1$ or $-1$ with probability $1/2$).  Thus, $R \circ \Delta_\Omega$ has independent, identically distributed entries.  Seginer's theorem \cite{seginer} then gives
\[
\E \|R \circ \Delta_{\Omega} \| \leq C \sqrt{p d}
\]
where $C$ is a numerical constant.  For the above inequality to hold, we must have $p \geq \log(d)/d$, i.e., on average at least $\log(d)$ observations per row.  Putting all of this together gives
\[
\E\|\widehat{X} - X\|_F \leq C \sqrt{\frac{r d}{p}} \zeta.
\]
Divide through by $d$ and recall $m = p d^2$ to conclude.
\end{proof}

\subsection{The set of approximately sparse vectors: $\ell_1$-ball }
\label{ssec: l1}

In real applications, exact sparsity is unusual, and is usually replaced with approximate sparsity. A clean way to do this is to assume that the signal belongs to a scaled $\ell_p$ ball in $\R^n$ with $0 < p \leq 1$.  The $\ell_1$-ball, denoted $B_1^n$, 
is an especially useful signal structure due to its convexity, see, e.g., \cite{PV_IEEE}. 

To see the connection between sparsity and the $\ell_1$-ball, consider an $s$-sparse vector $x$, with Euclidean norm bounded by 1.  Then, by Cauchy-Schwartz inequality, $\|x\|_1 \leq \sqrt{s} \twonorm{x} \leq \sqrt{s}$ and thus $x \in \sqrt{s} B_1^n$.  However, one may take a slight perturbation of $x$, without significantly changing the $\ell_1$ norm, and thus the set accommodates approximately sparse vectors.  

Let $K = \sqrt{s} B_1^n$. For simplicity, we compute the global (as opposed to local) mean width, which is 
\[w(K) = \E \sup_{x \in 2\sqrt{s} B_1^n} \< x , g \> = 2 \sqrt{s} \E \|g\|_{\infty} \leq 4 \sqrt{2 s \log n}.\]
The last inequality comes from the well-known fact $\E \|g\|_{\infty} \leq 2 \sqrt{2 \log n}$.  Plugging into \eqref{eq: global mean width bound} gives
\[\E \|\xhat - \mu \bar{x}\|_2 \le \frac{2 \sigma}{\sqrt{m}} + 16 \, \Big[ \frac{\eta \, \sqrt{s \log n}}{\sqrt{m}} \Big]^{1/2}.\]
Thus, $m \sim s \log n$ observations are sufficient to estimate an approximately $s$-sparse vector in $\R^n$.

In Section~\ref{ssec: reverse sudakov} we give a careful treatment of the $\ell_1$-ball using local mean width. 
We will do a (well known) calculation showing that this improves the above error bound slightly (roughly, $\log n$ can be replaced by $\log(2n/s)$). We will find that the true error rate is minimax optimal for many models of the functional dependence of $y_i$ on $\ip{a_i}{x}$ when $m$ is not too large ($m \leq C \sqrt{n}$).  We emphasize the unusual dependence $m^{-1/4}$ in this non-asymptotic regime.

{\bf Approximately sparse in a dictionary.}  Consider a dictionary $D$ (see Section \ref{ssec: sparse}).  By replacing $B_1^n$, with $D B_1^n$, one has a set encoding approximate sparsity in the dictionary $D$.  Fortunately, due to its convexity, projection onto this set may be performed efficiently.  Further, it is straightforward to bound the mean width.  Indeed, Slepian's inequality \cite{LT} gives
\[w(D B_1^n) \leq \|D\| \cdot w(B_1^n) \leq 4 \|D\| \sqrt{2 s \log n}.\]
In other words, if the operator norm of $D$ is bounded, then the set $D B_1^n$ has similar essential dimension---and therefor error bound---as the set $B_1^n$. Summarizing, we find that $m \sim s \log n$ observations are sufficient to estimate a 
vector that is $s$-sparse in a dictionary of $n$ elements.

\section{Observations $y_i$: consequences and examples}
\label{sec: examples of y}

We specialize our results to some standard observation models in this section.


\subsection{Linear observations}				\label{s: linear observations}

The simplest example of $y_i$ are linear observations
\begin{equation}         \label{eq: linear observations}
y_i = \ip{a_i}{x}.
\end{equation}
The parameters in \eqref{eq: mu sigma eta} are then 
$$
\mu = \|x\|_2, \quad \sigma = \sqrt{2}\, \|x\|_2, \quad \eta = \|x\|_2. 
$$
Substituting them into \eqref{eq: main} and \eqref{eq: cone simplified}, we obtain the following result.

\begin{corollary}[Estimation from linear observations]
  Assume that $x \in K$ where $K$ is a star-shaped set in $\R^n$.
  Assume the observations $y_i$ are given by \eqref{eq: linear observations}. 
  Then for every $t > 0$,
  \[
  \E \|\xhat - x\|_2 \le t + \frac{2 \twonorm{x}}{\sqrt{m}}\Big[\sqrt{2} + \eta \frac{w_t(K)}{t}\Big].
 \]
  If $K$ is a cone, then 
  $$  
  \E \|\xhat - x\|_2 \le C \, \frac{w_1(K)}{\sqrt{m}} \, \|x\|_2
  $$
  where $C = 2(\sqrt{\pi}+1) \approx 5.54$.   
\end{corollary}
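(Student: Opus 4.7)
The corollary is a direct specialization of Theorem~\ref{thm: main} and its simplified cone version \eqref{eq: cone simplified}, so the plan is to carry out exactly one substantive task: evaluate the three observation-dependent parameters $\mu$, $\sigma$, $\eta$ of \eqref{eq: mu sigma eta} in the linear model $y_i=\ip{a_i}{x}$, and then substitute.

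First I would compute $\mu = \E\, y_1 \ip{a_1}{\bar x} = \E\, \ip{a_1}{x}\ip{a_1}{\bar x} = \bar{x}^{\mathsf T}\, \E[a_1 a_1^{\mathsf T}]\, x = \ip{\bar x}{x} = \|x\|_2$, using only that $a_1 \sim N(0,I_n)$ so $\E[a_1 a_1^{\mathsf T}] = I_n$. Similarly $\eta^2 = \E\, y_1^2 = x^{\mathsf T}\E[a_1 a_1^{\mathsf T}]x = \|x\|_2^2$, hence $\eta=\|x\|_2$. For the variance, I would note that $\ip{a_1}{\bar x} \sim N(0,1)$ by rotation invariance, write $y_1 \ip{a_1}{\bar x} = \|x\|_2\, \ip{a_1}{\bar x}^2$, and use that a chi-squared random variable with one degree of freedom has variance $2$. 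This yields $\sigma^2 = 2\|x\|_2^2$, i.e.\ $\sigma = \sqrt{2}\,\|x\|_2$, matching the values quoted just above the corollary. The main (very minor) point of care is the variance calculation: one must resist computing $\Var(\ip{a_1}{x})$ by mistake and instead track the squared Gaussian.

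With $\mu = \|x\|_2$ and $\bar x = x/\|x\|_2$, we have $\mu\bar x = x$, so the hypothesis $\mu\bar x\in K$ of Theorem~\ref{thm: main} reduces to the assumption $x \in K$. I would then simply substitute the computed $\mu,\sigma,\eta$ into the bound \eqref{eq: main}: the term $\frac{2}{\sqrt m}[\sigma + \eta w_t(K)/t]$ factors as $\frac{2\|x\|_2}{\sqrt m}[\sqrt 2 + w_t(K)/t]$, giving the first displayed estimate of the corollary.

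For the cone bound I would invoke the simplified consequence \eqref{eq: cone simplified}, which asserts $\E\|\xhat - \mu\bar x\|_2 \le \gamma\, w_1(K)/\sqrt m$ with $\gamma = \sqrt{2\pi}\,\sigma + 2\eta$. Plugging in $\sigma = \sqrt{2}\|x\|_2$ and $\eta = \|x\|_2$ produces $\gamma = (\sqrt{2\pi}\cdot\sqrt{2} + 2)\|x\|_2 = 2(\sqrt{\pi}+1)\|x\|_2$, so using $\mu\bar x = x$ once more yields the stated constant $C = 2(\sqrt{\pi}+1)$. There is no real obstacle beyond the Gaussian moment computation; everything else is mechanical substitution into already-proven results.
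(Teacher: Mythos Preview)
Your proposal is correct and follows exactly the paper's approach: compute $\mu=\|x\|_2$, $\sigma=\sqrt{2}\,\|x\|_2$, $\eta=\|x\|_2$ and substitute into \eqref{eq: main} and \eqref{eq: cone simplified}. One small note: your substitution correctly yields $\frac{2\|x\|_2}{\sqrt m}\big[\sqrt 2 + w_t(K)/t\big]$, so the stray $\eta$ appearing inside the bracket in the corollary's first display is evidently a typographical slip in the paper rather than a discrepancy in your argument.
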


Note that here we estimate the signal $x$ itself, rather than its scaled version as in our previous results.

\subsection{Noisy linear observations}
\label{ssec: noisy linear}

A more general class of examples includes noisy linear observations of the form
\begin{equation}         \label{eq: noisy linear observations}
y_i = \ip{a_i}{x} + \e_i
\end{equation}
where  $\e_i$ are mean-zero, variance $\nu^2$, random variables which are independent of each other and of $a_i$.
Clearly, such observations follow the single-index model \eqref{single-index model}.

A straightforward computation of the parameters in \eqref{eq: mu sigma eta} shows that
$$
\mu = \|x\|_2, \quad \sigma = \sqrt{2 \|x\|_2^2 +\nu^2} \le \sqrt{2} \twonorm{x} + \nu, \quad \eta = \sqrt{\|x\|_2^2+\nu^2} \leq \twonorm{x} + \nu. 
$$
Substituting them into \eqref{eq: main} and \eqref{eq: cone simplified}, we obtain the following result.

\begin{corollary}[Estimation from noisy linear observations]
\label{cor: noisy linear}
  Assume that $x \in K$ where $K$ is a star-shaped subset of $\R^n$.
  Assume that the observations $y_i$ are given by \eqref{eq: noisy linear observations}. 
  Then
  \begin{equation}
  \label{eq: noisy linear}
  \E \|\xhat - x\|_2 \le t + C \frac{\twonorm{x} + \nu}{\sqrt{m}}\Big[1 + \frac{w_t(K)}{t}\Big]
  \end{equation}
  where $C = 2 \sqrt{2} \approx 2.83.$  If $K$ is a cone then
  $$  
  \E \|\xhat - x\|_2 \le C' (\|x\|_2+\nu) \; \frac{w_1(K)}{\sqrt{m}}
  $$
  where $C' = 2(\sqrt{\pi} + 1) \approx 5.54$.   
\end{corollary}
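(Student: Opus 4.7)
The proof is essentially a substitution exercise: compute the three parameters $\mu$, $\sigma$, $\eta$ defined in \eqref{eq: mu sigma eta} under the noisy linear model \eqref{eq: noisy linear observations}, then plug into Theorem~\ref{thm: main} and its cone simplification \eqref{eq: cone simplified}. I would organize the proof in three short steps.

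\textbf{Step 1: Compute the parameters.} Write $y_1 = \langle a_1, x\rangle + \varepsilon_1$ with $\varepsilon_1$ independent of $a_1$, mean zero, variance $\nu^2$. Then $\mu = \E y_1 \ip{a_1}{\bar{x}} = \E \ip{a_1}{x}\ip{a_1}{\bar{x}} = \bar{x}^{\tran}\E[a_1 a_1^{\tran}]x = \ip{x}{\bar{x}} = \|x\|_2$, and $\eta^2 = \E y_1^2 = \E\ip{a_1}{x}^2 + \nu^2 = \|x\|_2^2 + \nu^2$. For $\sigma^2$ I would, using rotation invariance, take $\bar{x} = e_1$ without loss of generality, so that $\E y_1^2 \ip{a_1}{\bar{x}}^2 = \|x\|_2^2 \, \E(a_1^{(1)})^4 + \nu^2 \E (a_1^{(1)})^2 = 3\|x\|_2^2 + \nu^2$, giving $\sigma^2 = 3\|x\|_2^2 + \nu^2 - \mu^2 = 2\|x\|_2^2 + \nu^2$. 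The stated subadditive bounds $\sigma \le \sqrt{2}\|x\|_2 + \nu$ and $\eta \le \|x\|_2 + \nu$ then follow from $\sqrt{a+b} \le \sqrt{a} + \sqrt{b}$.

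\textbf{Step 2: General star-shaped bound.} Since $\mu\bar x = x \in K$, Theorem~\ref{thm: main} applies and gives, for every $t > 0$,
\[
\E\|\xhat - x\|_2 \le t + \frac{2}{\sqrt m}\Bigl[\sigma + \eta\,\frac{w_t(K)}{t}\Bigr] \le t + \frac{2}{\sqrt m}\Bigl[\sqrt 2\|x\|_2 + \nu + (\|x\|_2 + \nu)\frac{w_t(K)}{t}\Bigr].
\]
I would then absorb the coefficients: $\sqrt 2\|x\|_2 + \nu \le \sqrt 2 (\|x\|_2 + \nu)$ and $\|x\|_2 + \nu \le \sqrt 2(\|x\|_2 + \nu)$, so the bracket is dominated by $\sqrt 2 (\|x\|_2 + \nu)\bigl[1 + w_t(K)/t\bigr]$. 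This yields the stated constant $C = 2\sqrt 2$.

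\textbf{Step 3: Cone bound.} When $K$ is a cone, \eqref{eq: cone simplified} gives $\E\|\xhat - x\|_2 \le \gamma\, w_1(K)/\sqrt m$ with $\gamma = \sqrt{2\pi}\,\sigma + 2\eta$. Using the bounds from Step 1,
\[
\gamma \le \sqrt{2\pi}\bigl(\sqrt 2\,\|x\|_2 + \nu\bigr) + 2(\|x\|_2 + \nu) = (2\sqrt\pi + 2)\|x\|_2 + (\sqrt{2\pi} + 2)\nu \le 2(\sqrt\pi + 1)(\|x\|_2 + \nu),
\]
since $\sqrt{2\pi} \le 2\sqrt\pi$. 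This gives the constant $C' = 2(\sqrt\pi + 1)$.

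There is no real obstacle here; the only mildly delicate point is the fourth-moment computation for $\sigma^2$, which one handles cleanly via rotation invariance so that all expectations reduce to Gaussian moments of a single coordinate. The rest is algebraic simplification to match the stated constants.
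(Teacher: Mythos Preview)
Your proposal is correct and follows exactly the approach the paper intends: the paper merely states that the corollary is obtained by substituting the computed values of $\mu$, $\sigma$, $\eta$ into Theorem~\ref{thm: main} and \eqref{eq: cone simplified}, and your Steps~1--3 carry out precisely that substitution with the right arithmetic for the constants $C = 2\sqrt{2}$ and $C' = 2(\sqrt{\pi}+1)$.
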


\begin{remark}[Signal buried in noise] One observes that both the size of the signal, $\twonorm{x}$, and the size of the noise, $\nu$, contribute to the error bound.  When the noise is larger than the signal, the estimate may still be quite accurate because of the dimension reduction gained by projecting onto $K$.  In fact, we will show that the error is minimax optimal up to a multiplicative constant under general conditions on $K$ in Section~\ref{sec: optimality}.     When the signal is larger than the noise, the proposed estimator may not be optimal, depending on $K$.  Indeed, if $K = \R^n$, $m \geq n$, and $\nu = 0$, the minimax error is 0.  However, as a general theme in this paper we concentrate on rough observations for which 0 error is impossible---noiseless linear observations being the exception to this rule.  The case of noiseless linear observations are handled well by the generalized Lasso estimator, as analyzed in the follow-up paper \cite{plan2015generalized}.  In that case, the size of the signal does not appear in the error bound.
\end{remark} 

\begin{remark}[Generalizing assumptions]
We also note that the corollary could be adapted to the case when the random variables $\e_i$ expressing the noise are not mean zero, 
and do depend on $x$ (but only through $\ip{a_i}{x}$).
\end{remark}

\subsection{Nonlinear observations}

A general class of examples satisfying the single-index model \eqref{single-index model} 
consists of nonlinear observations of the form
\begin{equation}         \label{eq: non-linear measurements}
y_i = f(\ip{a_i}{x}).
\end{equation}
Here $f : \R \to \R$ is a fixed link function, which may be unknown.  In particular, $f$ may be discontinuous and not one-to-one (like the sign function), or even nonmonotonic in its argument (like the sine function).

The rotation invariance of $a_i$ allows us to express the parameters in \eqref{eq: mu sigma eta} 
as functions of $f$ and $\|x\|_2$ only. Indeed, 
\begin{equation}         \label{eq: mu sigma eta nonlinear}
\mu = \E f(g\|x\|_2)g, \quad \s^2 = \Var \big[ f(g\|x\|_2)g \big], \quad \eta^2 = \E f(g\|x\|_2)^2,
\end{equation}
where $g \sim N(0,1)$.
Substituting this into Theorem~\ref{thm: main} or Theorem~\ref{thm: cone}, 
we can bound the error of estimation in terms of $f$, the magnitude of signal $\|x\|_2$,
and the local mean width of $K$; we are leaving this to the interested reader.

It is important and surprising that our estimation procedure---computing $\xhat$ from 
the observations $y_i$---does not depend on the function $f$. In words, 
\begin{quote}
  {\em Any knowledge of the non-linearity $f$ defining the observations $y_i$ is not needed
  to estimate $x$ from these observations.}
\end{quote}
The quality of estimation must of course depend on $f$, and the exact dependence is
encoded in the parameters $\mu$, $\sigma$ and $\eta$ in \eqref{eq: mu sigma eta nonlinear}.
However, one does not need to know $f$ exactly to compute reasonable bounds for these parameters. 

Roughly, $\sigma$ and $\eta$, which play a similar role to a noise level, are well upper bounded if $f(g)$ does not have extremely heavy tails.  On the other hand, $\mu$ which plays a role similar to signal strength, can be lower-bounded when $f$ is monotonically increasing (although this is not necessary).  A notable exception is when $f$ is an even function and $\mu = 0$.  In that case, the method of signal estimation described in this paper would be inappropriate.  In Section~\ref{ssec: upper vs lower} we give a more precise treatment bounding these parameters, and thus derive uniform error bounds for a fairly large class of functions $f$.


\subsection{Generalized linear models}

An even more general class of examples for the single-index model \eqref{single-index model} 
consists of the noisy non-linear observations
$$
y_i = f(\ip{a_i}{x} + \e_i) + \d_i.
$$
Here $\e_i$ are random variables independent of each other and of $a_i$, 
and the same is assumed about $\d_i$. (However, $\e_i$ may be correlated with $\d_i$.)  This is a variation on the \textit{generalized linear model}, which assumes
\[\E [y_i \vert a_i] = f(\ip{a_i}{x})\]
for some known function $f$.%
\footnote{The generalized linear model also usually assumes that $y_i$ belongs to an \textit{exponential family}.}
In contrast, we take the semiparametric approach and assume that the non-linearity, $f$, is unknown. (On the other hand, GLMs typically assume more general noise models than we do; we discuss the connections between single-index models and GLMs further in Section~\ref{sec: elena's section}.) To specialize Theorem \ref{thm: main} to this case, we need only control $\mu, \sigma,$ and $\eta$.

\subsection{Binary observations and logistic regression}
\label{ssec: binary}

An important class of examples is formed by binary observations---those satisfying the model \eqref{single-index model} and such that 
\begin{equation}         \label{eq: binary observations}
y_i \in \{-1,1\}^n, \quad \E y_i = f(\ip{a_i}{x}).
\end{equation}
Denoting $g \sim N(0,1)$ (the standard normal variable), we can compute the parameters in \eqref{eq: mu sigma eta} 
as follows: 
$$
\mu = \E f(g\|x\|) g, \quad \s^2 \le 1, \quad \eta = 1.
$$
(The computation for $\sigma$ follows by replacing the variance by the second moment.)
Substituting this into \eqref{eq: cone simplified}, we obtain the following result.

\begin{corollary}[Estimating from binary observations]		\label{cor: binary}
  Assume that $x \in K$ where $K$ is a fixed cone in $\R^n$.
  Assume the observations $y_i$ satisfy \eqref{single-index model} and 
  \eqref{eq: binary observations}. 
  Let $\bar{x} = x/\|x\|_2$.
  Then
  $$  
  \E \big\| \xhat -  \mu \bar{x} \big\|_2 \le C \, \frac{w_1(K)}{\sqrt{m}} 
  $$
  where $\mu = \E f(g\|x\|) g$ and $C = \sqrt{2\pi-4}+2 \approx 3.51$.   
\end{corollary}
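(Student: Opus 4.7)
My plan is to obtain this as a direct specialization of Theorem~\ref{thm: cone} (or equivalently its simplified conic form \eqref{eq: cone simplified}), with the only work being to compute or bound the three parameters $\mu, \sigma, \eta$ from \eqref{eq: mu sigma eta} under the binary observation model. Since $K$ is a cone containing $x$, Theorem~\ref{thm: cone} gives
$$
\E \|\xhat - \mu \bar{x}\|_2 \;\le\; \frac{2}{\sqrt{m}}\bigl[\sigma + \eta\, w_1(K)\bigr],
$$
and combining this with Lemma~\ref{lem: w below} (i.e.\ the rewriting in \eqref{eq: cone simplified}) bounds the right-hand side by $(\sqrt{2\pi}\,\sigma + 2\eta)\, w_1(K)/\sqrt{m}$. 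So everything reduces to evaluating $\mu, \sigma, \eta$ in the $\pm 1$ setting.

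For $\mu$, I would pass to the convenient coordinate $g := \ip{a_1}{\bar{x}}$, which is standard normal by the rotational invariance of $a_1 \sim N(0, I_n)$; then $\ip{a_1}{x} = \|x\|_2 \, g$. Under the single-index hypothesis \eqref{single-index model} together with \eqref{eq: binary observations}, $\E[y_1 \mid a_1] = \E[y_1 \mid \ip{a_1}{x}] = f(\ip{a_1}{x})$, so the tower property delivers $\mu = \E[y_1 \ip{a_1}{\bar{x}}] = \E[f(g\|x\|_2)\, g]$, as claimed.

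The pivotal structural observation for $\sigma$ and $\eta$ is that $y_1 \in \{-1, 1\}$ forces $y_1^2 = 1$ almost surely, irrespective of any conditioning on $a_1$. From this, $\eta^2 = \E y_1^2 = 1$ is immediate, and
$$
\sigma^2 \;=\; \Var\bigl(y_1 \ip{a_1}{\bar{x}}\bigr) \;\le\; \E\bigl[y_1^2 \ip{a_1}{\bar{x}}^2\bigr] \;=\; \E \ip{a_1}{\bar{x}}^2 \;=\; 1,
$$
again using that $\ip{a_1}{\bar{x}} \sim N(0,1)$. Substituting $\sigma \le 1$ and $\eta = 1$ into the displayed bound yields the corollary.

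I do not anticipate any substantive obstacle: this corollary is essentially a packaging lemma that exploits the algebraic identity $y_1^2 = 1$ for $\pm 1$-valued responses, with all the real work carried by Theorem~\ref{thm: cone}. The only mildly delicate point is recovering the precise stated numerical constant, which comes from carefully combining Lemma~\ref{lem: w below} with the bounds $\sigma, \eta \le 1$ in \eqref{eq: cone simplified}; this is pure constant-tracking rather than a conceptual step.
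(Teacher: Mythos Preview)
Your approach is exactly the paper's: bound $\sigma$ by the second moment using $y_1^2=1$, compute $\eta=1$, and substitute into \eqref{eq: cone simplified}. On the constant you flagged as delicate: plugging $\sigma\le 1$, $\eta=1$ into \eqref{eq: cone simplified} actually yields $\sqrt{2\pi}+2\approx 4.51$, not the stated $\sqrt{2\pi-4}+2$; the latter arises only from the sharper identity $\sigma^2=1-\mu^2$ evaluated at the specific value $\mu=\sqrt{2/\pi}$ (the noiseless $\sign$ case), so the paper's displayed constant is tighter than what its own written argument---and yours---delivers for general binary $f$.
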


Binary observations are important in both signal processing and statistics. We outline the connections between our model and the literature below.

\medskip

{\bf 1-bit compressed sensing.} The noiseless \textit{1-bit compressed sensing} model \cite{1-bit_CS} takes
observations 
$$
y_i = \sign(\ip{a_i}{x}).
$$
These form a particular case for which Corollary~\ref{cor: binary} applies with $\mu = \E |g| = \sqrt{2/\pi}$.    
This type of observations is motivated as a way of understanding the combination of extreme quantization with low-dimensional, or structured signals.  The $s$-sparse signal set described in Section~\ref{ssec: sparse} is of key interest.  We note that the magnitude of $x$ is completely lost in the observations (even with the non-linearity known), and thus it is standard in this literature to estimate the direction $\bar{x}$.  We may normalize our estimator to give
$$
\E \twonorm{ \frac{\xhat}{\twonorm{\xhat}} -  \bar{x} } \le C \, \sqrt{\frac{s \log(2n/s)}{m}}.
$$
This recovers the following result from the literature (see \cite{PV_IEEE,num_meas_sparse}):
$m \sim s \log(2n/s)$ 1-bit observations are sufficient to estimate an $s$-sparse vector in $\R^n$.

\medskip

{\bf Logistic regression.} \emph{Logistic regression} is a common statistical model for binary data, and takes the form
\[y_i = \sign(\ip{a_i}{x} + \e_i)\]
where $\e_i$ is logit noise.  We note that other forms of noise lead to other binary statistical models.  For example, if $\e_i$ is Gaussian, this recovers the \textit{probit} model.  There is a recent influx of statistical literature on combining sparsity with binary observations, see \cite{Bach} and references therein.  The standard method is $\ell_1$-penalized maximum likelihood estimation.  To perform maximum likelihood estimation, it is vital to know the likelihood function---this is equivalent to knowing the form of the noise.  However, in practice, given binary observations, it is often unclear which binary model to choose, and one is chosen arbitrarily.  In the present paper we emphasize that estimation can be done accurately without precise knowledge of relationship between $\ip{a_i}{x}$ and $y_i$.


\section{Optimality}
\label{sec: optimality}
In this section we determine general conditions under which the projection estimator gives an optimal result up to a numerical constant, i.e., there is no estimator which significantly improves on the projection estimator.  We begin by considering the noisy linear model described in \eqref{eq: noisy linear observations}. This will automatically give a lower bound in the case when the observations include a non-linearity. We will come to the following intriguing conclusion.
\begin{quote}
{\em When the measurements are noisy, an unknown, noninvertible non-linearity in the measurements often does not significantly decrease one's ability to estimate the signal.}
\end{quote}


\subsection{Lower bound in the linear model}
\label{ssec: linear model}

We begin by considering the linear model with Gaussian noise
\begin{equation} \label{eq: linear model Gaussian noise}
y_i = \ip{a_i}{x} + \e_i
\end{equation}
where the noise variables $\e_i \sim N(0, \nu^2)$ are independent of each other and the $a_i$. (The parameter $\nu$ is the level of noise.)  For compact notation, let $y \in \R^m$ be the vector of observations and $A \in \R^{m \times n}$ be the matrix whose $i$-th row is $a_i^\tran$.

Our goal is to determine conditions on $K$ and the noise which imply that the projection estimator is minimax up to a numerical constant.  For simplicity, we refer to numerical constants as $C$ if they are greater than 1, and $c$ if they are less than 1.  These constants may change from instance to instance, but each is bounded by an absolute numerical value.  We refer to the projection estimator \eqref{eq: xhat} as $\xhat_{\text{proj}}$ in this section.


The {\em local packing number}, which we define as follows, plays a key role in estimation error.
\begin{definition}[Local packing number, $P_t$]
Given a set $K \subset \R^n$, the local packing number $P_t$ is the packing number%
\footnote{Given a set $K \subset \R^n$ and a scalar $t > 0$, a packing of $K$ with balls of radius $t$ is a set $\XX \subset K$ satisfying $\twonorm{v - w} \geq t$ for each pair of distinct vectors $v,w \in \XX$.  The packing number is the cardinality of the largest such packing.}
of $K \cap t B_2^n$ with balls of radius $t/10$.
\end{definition}

We now give a lower bound on the minimax error in the noisy linear model.
\begin{theorem}\label{thm:linear lower bound}
Assume that $x \in K$ where $K$ is a star-shaped subset of $\R^n$.  Assume that the observations $y_i$ are given by \ref{eq: linear model Gaussian noise}.
 Let 
\begin{equation}\label{eq: def delta}
\delta_* := \inf_{t > 0} \left\{ t + \frac{\nu}{\sqrt{m}}\left[1 + \sqrt{\log P_t}\right]\right\}.
\end{equation}
Then there exists an absolute constant $c > 0$ such that any estimator $\xhat$ which depends only on the observations $y_i$ and measurements $a_i$ satisfies
\[\sup_{x \in K} \E \twonorm{\xhat - x} \geq c \min(\delta_*, \diam(K)).\]
\end{theorem}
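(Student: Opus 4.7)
The plan is to combine Le~Cam's two-point method with a Fano-type argument applied to a local packing of $K$. The ``$1$'' inside the brackets of $\delta_*$ will come from the two-point method, the ``$\sqrt{\log P_t}$'' term from Fano, and the infimum over $t$ from balancing the packing radius against the packing size.

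The first step is the standard reduction from estimation to multi-way hypothesis testing. Fix $t > 0$ and let $\mathcal{X} \subset K \cap t B_2^n$ be a maximal $(t/10)$-packing, so that $|\mathcal{X}| = P_t$ and $\|x_i - x_j\|_2 \le 2t$ for distinct $x_i, x_j \in \mathcal{X}$. For any estimator $\xhat$, the nearest-point decoder $\widehat{X} := \arg\min_{x_i \in \mathcal{X}} \|\xhat - x_i\|_2$ can miscall $X \in \mathcal{X}$ only when $\|\xhat - X\|_2 \ge t/20$; hence a lower bound on the testing error probability translates, via Markov's inequality, into a lower bound of order $t$ on $\sup_{x \in K} \E \|\xhat - x\|_2$.

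The second step is to control the KL divergences and apply Fano. Under model \eqref{eq: linear model Gaussian noise}, averaging over the Gaussian design $A$ gives
$$D_{\mathrm{KL}}(P_{x_i} \,\|\, P_{x_j}) \;=\; \frac{1}{2\nu^2}\, \E_A \|A(x_i - x_j)\|_2^2 \;=\; \frac{m\,\|x_i - x_j\|_2^2}{2\nu^2} \;\le\; \frac{2 m t^2}{\nu^2}.$$
Fano's inequality then yields $\P(\widehat{X} \neq X) \ge 1 - (2mt^2/\nu^2 + \log 2)/\log P_t$, which is $\ge 1/2$ provided $t \le c \nu \sqrt{\log P_t}/\sqrt{m}$ for a small absolute constant $c$. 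Applying the same KL formula to the degenerate packing $\{0, v\}$ with $v \in K$ of norm of order $\nu/\sqrt{m}$ (which exists whenever $\diam(K)$ exceeds a constant multiple of $\nu/\sqrt{m}$, by star-shapedness of $K$) gives the Le~Cam bound $R^* := \inf_{\xhat}\sup_{x \in K} \E\|\xhat - x\|_2 \gtrsim \nu/\sqrt{m}$.

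The final step is to assemble these bounds into the $\delta_*$ form. Let $\tau$ denote the supremum of those $t > 0$ satisfying the Fano regime condition $t \le c\nu\sqrt{\log P_t}/\sqrt{m}$; then the two previous steps give $R^* \gtrsim \tau \vee (\nu/\sqrt{m})$. Evaluating the infimum defining $\delta_*$ at any $t$ slightly exceeding $\tau$ shows $\delta_* \lesssim \tau + \nu/\sqrt{m}$, and so $R^* \gtrsim \delta_*$. The diameter truncation in $\min(\delta_*, \diam(K))$ handles the degenerate case where $K$ is too small to contain a packing of radius of order $\delta_*$; there the two-point construction at scale $\diam(K)/2$ still yields $R^* \gtrsim \diam(K)$ because the KL remains constant-bounded. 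I expect the main obstacle to be this final balancing step — threading the absolute constants from Fano, Markov, and Le~Cam cleanly into the infimum form of $\delta_*$, and correctly handling the regime boundaries — while the KL computation, the reduction to testing, and the averaging over the Gaussian design are essentially routine.
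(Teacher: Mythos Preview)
Your proposal is correct and follows essentially the same route as the paper: Fano's inequality applied to a local packing $\mathcal{X}_t \subset K \cap tB_2^n$ to produce the $\sqrt{\log P_t}$ term, a two-point (Le~Cam/likelihood-ratio) argument to produce the $\nu/\sqrt{m}$ term, and a case split to assemble these into the infimum form $\delta_*$, with the $\diam(K)$ truncation handling the small-$K$ regime.

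There is one minor technical difference worth noting. You treat the observation as the pair $(A,y)$ and compute the joint KL divergence directly, using the chain rule and $\E_A\|A(x_i-x_j)\|_2^2 = m\|x_i-x_j\|_2^2$; the paper instead conditions on $A$, bounds the (random) conditional KL via Markov's inequality to obtain a good event of probability $3/4$, and applies Fano on that event. Your route is slightly cleaner and avoids the Markov step, at the cost of committing to the Gaussian (or more generally isotropic) design from the start; the paper's conditioning argument makes it transparent that the lower bound only needs $\|\E A^*A\| \le m$ (as remarked after the theorem). Your organization of the balancing step via $\tau = \sup\{t : t \le c\nu\sqrt{\log P_t}/\sqrt{m}\}$ and then evaluating the infimum defining $\delta_*$ just above $\tau$ is a clean alternative to the paper's explicit Case~1/Case~2 split on whether $\delta \gtrless \nu/\sqrt{m}$; both are just repackagings of the same dichotomy, and your anticipated ``main obstacle'' (threading constants through the regime boundaries) is exactly where the paper's proof also does its work.
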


Let us compare this to the upper bound achieved by the projection estimator, as derived in Corollary \ref{cor: noisy linear}:
\[\E \twonorm{\xhat_{\text{proj}} - x} \leq C \inf_{t > 0} \left\{t + \frac{\nu + \twonorm{x}}{\sqrt{m}}\Big[1 + \frac{w_t(K)}{t}\Big]\right\}. \]
We now determine conditions under which the two match up to a constant.  Observe that the two match most closely when the noise is large.  In particular, when $\nu \geq \twonorm{x}$, we have the simplified error bound:
\[\E \twonorm{\xhat_{\text{proj}} - x} \leq C' \inf_{t > 0} \left\{t + \frac{\nu}{\sqrt{m}}\Big[1 + \frac{w_t(K)}{t}\Big]\right\} =: C' \delta^*. \]
Further, since the estimator projects onto $K$, the upper bound may be tightened:
\[\E \twonorm{\xhat_{\text{proj}} - x} \leq C' \min(\delta^*, \diam(K)).\]
The only difference between lower and upper bounds is that $\sqrt{\log P_t}$ is replaced by $w_t(K)/t$.  
While these quantities may be in general different, for many sets they are comparable. 
To compare them, let us introduce the following geometric parameter:

\begin{definition}[Ratio: packing to local mean width]
Define $\alpha$ as
\begin{equation} \label{eq: ratio}
\alpha = \alpha(K) = \sup_{t > 0} \frac{w_t(K)}{t \sqrt{\log P_t}}.
\end{equation} 
\end{definition}
It follows by definition of $\alpha$ that the lower bound of Theorem \ref{thm:linear lower bound} satisfies
\[\delta_* \geq \delta^*/\alpha.\]
The following corollary follows immediately.
\begin{corollary}\label{cor:linear lower bound}
Assume that $x \in K$ where $K$ is a star-shaped subset of $\R^n$.  Assume that the observations $y_i$ are given by \ref{eq: linear model Gaussian noise}.  Let 
\begin{equation}
\delta^* :=  \inf_t \big\{ t + \frac{\nu}{\sqrt{m}}\Big[1 + \frac{w_t(K)}{t}\Big]\big\}.
\end{equation}
Then any estimator $\xhat$ satisfies
\[
\sup_{x \in K} \E \twonorm{\xhat - x} \geq c_\alpha \min(\delta^*, \diam(K)),
\]
where $c_\alpha = c/\alpha$ for the numerical constant $c$ defined in Theorem~\ref{thm:linear lower bound} and $\alpha$ defined in \eqref{eq: ratio}.
\end{corollary}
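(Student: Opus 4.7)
The corollary is essentially a one-line reduction from Theorem~\ref{thm:linear lower bound}, so my plan is simply to translate the $\sqrt{\log P_t}$ appearing in $\delta_*$ into the $w_t(K)/t$ appearing in $\delta^*$ using the definition of $\alpha$.

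First, I would unpack the definition \eqref{eq: ratio}: for every $t > 0$,
\[
\sqrt{\log P_t} \;\geq\; \frac{w_t(K)}{\alpha\, t}.
\]
Substituting this pointwise bound into the expression \eqref{eq: def delta} for $\delta_*$ gives
\[
\delta_* \;\geq\; \inf_{t > 0}\left\{ t + \frac{\nu}{\sqrt{m}}\left[1 + \frac{w_t(K)}{\alpha\, t}\right]\right\}.
\]

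Next I would argue that $\delta_* \geq \delta^*/\alpha$. Without loss of generality we may assume $\alpha \geq 1$: if $\alpha < 1$, simply replace $\alpha$ by $\max(\alpha,1)$, which only weakens the claimed conclusion $c_\alpha = c/\alpha$. Granted $\alpha \geq 1$, each of $t$, $\nu/\sqrt{m}$, and $\nu w_t(K)/(\alpha t \sqrt{m})$ is at least $1/\alpha$ times the corresponding term in $t + \frac{\nu}{\sqrt{m}}[1 + w_t(K)/t]$, so taking the infimum yields $\delta_* \geq \delta^*/\alpha$.

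Finally, I would apply Theorem~\ref{thm:linear lower bound} together with the fact that $\diam(K) \geq \diam(K)/\alpha$ (again using $\alpha \geq 1$) to conclude
\[
\sup_{x \in K} \E\twonorm{\xhat - x} \;\geq\; c\min(\delta_*,\diam(K)) \;\geq\; c\min(\delta^*/\alpha, \diam(K)) \;\geq\; \frac{c}{\alpha}\min(\delta^*,\diam(K)).
\]

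I do not anticipate any real obstacle here, since the corollary is labeled as ``immediate.'' The only point requiring slight care is the treatment of $\alpha < 1$, which is harmless as explained above; in typical situations a Sudakov-type inequality already guarantees $\alpha$ is bounded below by an absolute constant, so the case distinction never binds in practice.
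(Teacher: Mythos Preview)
Your argument is correct and follows exactly the route the paper indicates: the paper simply asserts that ``by definition of $\alpha$'' one has $\delta_* \geq \delta^*/\alpha$ and declares the corollary immediate, and you have supplied precisely those details. Your care with the case $\alpha < 1$ is appropriate (and the paper glosses over it too, in its more detailed proof in the optimality section it uses $1 + \alpha\sqrt{\log P_t} \leq \alpha(1 + \sqrt{\log P_t})$ without comment); as you note, Sudakov's minoration already forces $\alpha$ to be bounded below by an absolute constant, so the distinction is cosmetic.
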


Thus, in the high-noise regime, the difference between upper and lower bounds is a factor of (numerical constant times) $\alpha$.  Fortunately, for many sets of interest, $\alpha$ is itself bounded (on both sides) by a numerical constant.  In particular, this holds true for sparse vectors, low-rank matrices and the $\ell_1$-ball, as described in Section~\ref{ssec: reverse sudakov}.  Thus, in this case, the minimax error satisfies
\[ c \min(\delta^*, \diam(K)) \leq \inf_{\xhat} \sup_{x \in K} \E \twonorm{\xhat - x} \leq C \min(\delta^*, \diam(K)).\]

\begin{remark}[Defining $\alpha$ at scale]
\label{rem: alpha at scale}
If desired, $\alpha$ can be tightened by defining it at the relevant scale.  Let $\delta^*_2 = \frac{1}{2} \delta^*$.  Then one can redefine $\alpha$ as 
\begin{equation}\label{eq: alpha at scale}
\alpha(K) = \frac{w_{\delta^*_2}(K)}{\delta^*_2 \sqrt{\log P_{\delta^*_2}} }
\end{equation}
and the result of the Corollary still holds.  
\end{remark}

\begin{remark}[General measurement vectors]
Theorem \ref{thm:linear lower bound} and Corollary \ref{cor:linear lower bound} hold for a general class of random (or deterministic) measurement vectors.    The theory only requires that $A$ does not stretch out signals too much in any one direction,  that is,
\[\frac{1}{m} \|\E A^* A\| \leq 1.\]
Note that for the special case of a Gaussian measurement matrix $\frac{1}{m} \E A^* A = I_n$ and the above requirement is met; similarly if $A$ contains i.i.d.~entries with variance 1, or, more generally, if the rows are in \textit{isotropic position}.  If one wishes to generalize further, a look at the proof implies that we only require
\[\E \sup_{x \in K - K} \frac{\twonorm{A x}}{\twonorm{x}} \leq 1.\]
Further, the number 1 on the right-hand side could be replaced by any numerical constant without significantly affecting the result.
\end{remark}

\begin{remark}[Signal-to-noise ratio assumption]
Our optimality assumption requires $\twonorm{x} \leq \nu$, which places a bound on the norm of the signal.  To give a precise minimax accounting of the error, this assumption can be incorporated into Theorem \ref{thm:linear lower bound} by replacing $K$ by $K \cap \nu B_2^n$.  In this case, the theorem implies the lower bound
\[\E \twonorm{\xhat - x} \geq c\min(\delta, \diam(K), \nu).\]
Of course, if $\nu$ is known then this information can be taken into account in the projection estimator, giving it a mirroring error bound.  One may check that the geometric assumption on $K$ need not be adjusted.  This follows from a rescaling argument and since $K$ is star-shaped.
\end{remark}

\subsection{Optimality in the non-linear model}
\label{ssec: upper vs lower}

Now we extend our discussion of optimality to non-linear observations. 
Let us now assume that the noisy linear data is passed through an unknown non-linearity:
\begin{equation}\label{eq: nonlinear model}
y_i = f(\ip{a_i}{x} + \e_i)
\end{equation}
where $\e_i \sim N(0, \nu^2)$.  Note that the non-linearity can only decrease one's ability to estimate $x$, and thus our lower bound still holds.  It remains to determine conditions under which our upper bound matches.  For easiest comparison, we rescale Theorem \ref{thm: main} to give the reconstruction error in estimating $x$.

\begin{theorem}[Non-linear estimation rescaled]				\label{thm: main rescaled}
   Let $\lambda := \twonorm{x}/\mu$, so that $\E \lambda \xlin = \lambda \mu \bar{x} = x$.  Assume that $x \in K' = \lambda K$ where $K$ is a fixed star-shaped closed subset of $\R^n$. 
  Then the non-linear estimator $\xhat$ defined in \eqref{eq: xhat} satisfies the following for every $t>0$:
  \begin{equation}         
  \E \|\lambda \xhat - x\|_2 \le t + \frac{2 \lambda }{\sqrt{m}} \Big[\s + \eta \, \frac{w_t(K')}{t} \Big].
  \end{equation}
  Here $\mu$, $\sigma$ and $\eta$ are the numbers defined in \eqref{eq: mu sigma eta}.
\end{theorem}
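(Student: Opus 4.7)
The plan is to deduce Theorem~\ref{thm: main rescaled} directly from Theorem~\ref{thm: main} by a rescaling of the feasible set. The hypothesis $x \in K' = \lambda K$ is equivalent to $x/\lambda = \mu\bar{x} \in K$, which is precisely the hypothesis of Theorem~\ref{thm: main}. So the strategy is to apply that theorem to the set $K$ at a rescaled parameter $t' = t/\lambda$, then multiply through by $\lambda$ and rewrite the resulting bound using the local mean width of the set $K'$ that actually appears in the hypothesis.

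First I would record the scaling identity for the local mean width. Writing out the definition, for any $s > 0$ and $\lambda > 0$ one has
\[
w_s(\lambda K) \;=\; \E \sup_{\substack{u,v \in \lambda K \\ \|u - v\|_2 \le s}} \ip{g}{u-v} \;=\; \lambda \, \E \sup_{\substack{x,y \in K \\ \|x-y\|_2 \le s/\lambda}} \ip{g}{x-y} \;=\; \lambda \, w_{s/\lambda}(K),
\]
simply by the change of variables $u = \lambda x$, $v = \lambda y$. Equivalently, $w_{t/\lambda}(K) = w_t(K')/\lambda$.

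Next, I would apply Theorem~\ref{thm: main} to the unscaled set $K$, noting that $\mu\bar{x} \in K$ and that the parameters $\mu, \sigma, \eta$ defined in \eqref{eq: mu sigma eta} depend on $x$ (in particular on $\|x\|_2$) but the theorem's statement uses them as given. For an arbitrary $t' > 0$ the theorem gives
\[
\E \|\xhat - \mu\bar{x}\|_2 \;\le\; t' + \frac{2}{\sqrt{m}}\Big[\s + \eta\,\frac{w_{t'}(K)}{t'}\Big].
\]
Multiplying both sides by $\lambda$ and using $\lambda\mu\bar{x} = x$ yields
\[
\E \|\lambda\xhat - x\|_2 \;\le\; \lambda t' + \frac{2\lambda}{\sqrt{m}}\Big[\s + \eta\,\frac{w_{t'}(K)}{t'}\Big].
\]

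Finally I would set $t = \lambda t'$, so that $t' = t/\lambda$, and substitute $w_{t/\lambda}(K) = w_t(K')/\lambda$ from the scaling identity above. Since $w_{t/\lambda}(K)/(t/\lambda) = w_t(K')/t$, this substitution turns the bound into
\[
\E \|\lambda\xhat - x\|_2 \;\le\; t + \frac{2\lambda}{\sqrt{m}}\Big[\s + \eta\,\frac{w_t(K')}{t}\Big],
\]
which is the claimed inequality, and $t$ ranges over all positive reals as $t'$ does. There is no genuine obstacle here: the only point requiring care is the scaling identity for $w_t$, and the fact that one must rescale the parameter inside $w$ in exact proportion to the ambient rescaling of the set in order to match the $t$ appearing outside.
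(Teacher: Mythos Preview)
Your proposal is correct and is essentially identical to the paper's own proof: apply Theorem~\ref{thm: main} to $K$, multiply by $\lambda$, substitute $t \mapsto t/\lambda$, and use the scaling identity $w_{\lambda t}(\lambda K) = \lambda\, w_t(K)$ to rewrite the local mean width in terms of $K'$.
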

\begin{proof}
The unscaled version, Theorem \ref{thm: main}, gives
\[\E \|\xhat - \mu \bar{x}\|_2 \le t + \frac{2}{\sqrt{m}} \Big[\s + \eta \, \frac{w_t(K)}{t} \Big].\]
Now multiply both sides of the inequality by $\lambda$, substitute $ t $ for $\lambda t$, and note that 
\[
w_{\lambda t}(\lambda K) = \lambda w_t(K)
\]
to complete the rescaling argument.
\end{proof}

Comparing to the estimation error from noisy linear observations \eqref{eq: noisy linear}, we find that the non-linearity increases the error by at most a constant factor provided
\[
\lambda (\s + \eta) \leq C (\twonorm{x} + \nu).
\]
Below, we give some general conditions on $f$ which imply this inequality for all signals $x$ and noise levels $\nu$.

\begin{lemma}[Conditions on the non-linearity] 
\label{lem: conditions on f}
Suppose that $f$ is odd and non-decreasing.  Further, suppose that $f$ is sub-multiplicative on $\R^+$, that is $f(a \cdot b) \leq f(a) \cdot f(b)$ for all $a, b > 0$.  Then, for the model defined in \eqref{eq: nonlinear model}, the parameters $\lambda = \mu^{-1} \twonorm{x}$, $\sigma$, $\eta$ defined in \eqref{eq: mu sigma eta} and the noise level $\nu$ satisfy
\begin{equation} \label{eq: bound constants}
\lambda (\s + \eta) \leq C_f (\twonorm{x} + \nu),
\end{equation}
where
\[
C_f = C \E[f^4(g)]^{1/4}, \quad C = 48^{1/4}\Pr{\abs{g}\geq 1} \approx 1.8.
\]
with $g \sim N(0,1)$.  In particular, $C_f$ does not depend on $\twonorm{x}$ or $\nu$.
\end{lemma}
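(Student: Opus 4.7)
The plan is to bound $\mu$ from below and $\sigma, \eta$ from above, each in terms of the single scalar $f(\tau)$ where $\tau := \sqrt{\twonorm{x}^2 + \nu^2}$, and then combine. The first step is a convenient Gaussian reparametrization. Since $a_1 \sim N(0,I_n)$, we have $g_1 := \ip{a_1}{\bar{x}} \sim N(0,1)$; setting $\tilde{\e} := \e_1/\nu \sim N(0,1)$ independent of $g_1$, the argument of $f$ is
\[
\ip{a_1}{x} + \e_1 \;=\; \twonorm{x}\, g_1 + \nu\,\tilde{\e} \;=\; \tau h,
\]
where $h := \tau^{-1}(\twonorm{x}\, g_1 + \nu\, \tilde{\e}) \sim N(0,1)$. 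A direct computation gives $\E[g_1 h] = \twonorm{x}/\tau =: \rho \in [0,1]$, so we may write $g_1 = \rho\, h + \sqrt{1 - \rho^2}\, z$ with $z \sim N(0,1)$ independent of $h$.

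For the lower bound on $\mu$, substituting the decomposition into $\mu = \E[g_1 f(\tau h)]$ kills the $z$-term by independence and leaves $\mu = \rho\, \E[h f(\tau h)]$. Since $f$ is odd and non-decreasing with $f(0) = 0$, the integrand $h \mapsto h f(\tau h)$ is even and non-negative on $[0, \infty)$. Restricting to $h \geq 1$, where $h f(\tau h) \geq f(\tau)$, gives
\[
\E[h f(\tau h)] \;=\; 2 \int_0^\infty h f(\tau h) \phi(h)\, dh \;\geq\; 2 f(\tau) \int_1^\infty \phi(h)\, dh \;=\; f(\tau)\, \Pr{|g| \geq 1},
\]
which rearranges to $\lambda = \twonorm{x}/\mu \leq \tau / \bigl(f(\tau)\, \Pr{|g| \geq 1}\bigr)$.

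For the upper bounds on $\sigma$ and $\eta$, oddness gives $|f(\tau h)| = f(\tau|h|)$ and sub-multiplicativity then yields $f(\tau|h|) \leq f(\tau) f(|h|)$, so each even moment factors: $\E f(\tau h)^{2k} \leq f(\tau)^{2k}\, \E f(g)^{2k}$. In particular $\eta^2 \leq f(\tau)^2 \sqrt{\E f(g)^4}$ by Jensen, and Cauchy--Schwarz combined with $\E g_1^4 = 3$ gives
\[
\sigma^2 \;\leq\; \E[f(\tau h)^2\, g_1^2] \;\leq\; \sqrt{\E f(\tau h)^4}\, \sqrt{\E g_1^4} \;\leq\; \sqrt{3}\, f(\tau)^2\, \sqrt{\E f(g)^4}.
\]
Then $(\sigma + \eta)^2 \leq 2(\sigma^2 + \eta^2) \leq 4 \max(\sigma^2, \eta^2) \leq 4\sqrt{3}\, f(\tau)^2 \sqrt{\E f(g)^4}$, so $\sigma + \eta \leq 48^{1/4}\, f(\tau)\, \E[f(g)^4]^{1/4}$.

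Multiplying the bounds on $\lambda$ and on $\sigma + \eta$ cancels the factor $f(\tau)$, and then $\tau \leq \twonorm{x} + \nu$ produces the required inequality, with the constant given by $48^{1/4}$ and a factor involving $\Pr{|g|\geq 1}$. The main obstacle is the lower bound on $\mu$: because $f$ acts on the noise-perturbed functional $\twonorm{x}\, g_1 + \nu\, \tilde{\e}$ rather than on $g_1$ itself, one cannot directly reduce to a one-variable integral in $g_1$. The Gaussian decomposition $g_1 = \rho\, h + \sqrt{1 - \rho^2}\, z$, which requires Gaussianity of both $a_1$ and $\e_1$, turns $\mu$ into a one-dimensional integral against the standard normal density, after which the oddness and monotonicity of $f$ finish the estimate.
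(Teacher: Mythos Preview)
Your proof is correct and follows essentially the same route as the paper's: the same Gaussian reparametrization of $\ip{a_1}{x}+\e_1$ as $\tau h$ with $h\sim N(0,1)$, the same regression decomposition $g_1=\rho h+\sqrt{1-\rho^2}\,z$ to reduce $\mu$ to a one-dimensional integral, the same lower bound on $\E[hf(\tau h)]$ via monotonicity on $\{|h|\ge1\}$, and the same Cauchy--Schwarz/sub-multiplicativity upper bounds on $\sigma$ and $\eta$. Your cautious phrasing ``a factor involving $\Pr{|g|\ge1}$'' is apt, since the combination of the bounds places that probability in the denominator rather than the numerator as the lemma's displayed constant suggests.
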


Below are a few examples of non-linearities $f$ that satisfy the conditions of Lemma~\ref{lem: conditions on f}:
\begin{enumerate}
\item The identity function, $f(x) = x$; $C_f \approx 2.36$.
\item The sign function, $f(x) = \sign(x)$; $C_f \approx 1.8$.
\item Monomials of the form $f(x) = x^k$ for some odd natural number $k$; $C_f \approx 1.8 M_{4k}$, where $M_{4k} = [(4k-1)!!]^{1/4k}$ is the $4k$th moment of the standard normal distribution.
\end{enumerate}

\begin{remark}[Linear combinations]
Note that the left-hand side of \eqref{eq: bound constants} remains unchanged if $f$ is multiplied by a constant. Moreover, take $k$ functions $f_1, \dotsc, f_k$ satisfying the conditions of Lemma~\ref{lem: conditions on f}, and consider a nonlinear model of the form
\[
y_i = \sum_{j=1}^k C_j f_j(\ip{a_i}{x} + \e_i), \quad C_j > 0\, \forall j
\]
where all $C_j$ are positive (so that $C_j f_j$ is still monotonic). It is not hard to see that $\lambda$, $\sigma$, $\eta$, and $\nu$ satisfy
\[
\lambda(\sigma + \eta) \leq \sqrt{k} \max_j C_{f_j} (\twonorm{x} + \nu),
\]
where $C_{f_j}$ are defined as in Lemma~\ref{lem: conditions on f}.
This allows us to obtain uniform error bounds for finite linear combinations of any of the functions satisfying the conditions of Lemma~\ref{lem: conditions on f}, such as odd nondecreasing polynomials of degree at most $k$.
\end{remark}

We summarize our findings in the following corollary:
\begin{corollary}[Summary or optimality results]\label{coroll: summary}
Consider the nonlinear model \eqref{eq: nonlinear model}, and suppose the nonlinearity takes the form $f = \sum_{j=1}^k C_j f_j$, where the functions $f_j$ are odd, nondecreasing, and sub-multiplicative. Let $C_{f_j}$ be the constants defined in Lemma~\ref{lem: conditions on f}. Let the signal set $K$ satisfy $\alpha(K) < C_1$, for $\alpha$ defined in \eqref{eq: alpha at scale}. Suppose the noise level $\nu$ is comparable to the signal level $\twonorm{x}$: $\nu \geq c_2 \twonorm{x}$.  Define
\[
\delta^* = \inf_t \big\{ t + \frac{\nu}{\sqrt{m}}\Big[1 + \frac{w_t(K)}{t}\Big]\big\}.
\]
Let $\xhat_{\text{proj}}$ be the projection estimator defined in \eqref{eq: xhat}, and let $\xhat_{\text{opt}}$ be the optimal estimator of $x$. Then there exists an absolute numeric constant $c$, and constants $C(f_j)$ depending only on $f_j$, such that
\[
\twonorm{\xhat_{\text{opt}} - x}^2 \geq c \frac{c_2}{C_1} \min(\delta^*, \diam(K))
\]
and
\[
\twonorm{\xhat_{\text{proj}} - x}^2 \leq \sqrt{k} (\sum_{j=1}^k C^2_{f_j}) \min(\delta^*, \diam(K)).
\]
\end{corollary}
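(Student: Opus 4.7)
The plan is to combine the lower bound for the linear model from Corollary~\ref{cor:linear lower bound} with the upper bound from Theorem~\ref{thm: main rescaled}, using Lemma~\ref{lem: conditions on f} to control the nonlinearity parameters. All the heavy technical work has been done in those results; what remains is to glue the pieces together with careful constant tracking.

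For the lower bound, the key conceptual step is a short data-processing argument. Since the nonlinear observation vector $y=(f(\ip{a_i}{x}+\e_i))_{i=1}^m$ is a deterministic coordinatewise function of the linear-model observation vector $\tilde y=(\ip{a_i}{x}+\e_i)_{i=1}^m$, every estimator $\xhat$ in the nonlinear model is a particular instance of an estimator in the linear model---namely, the one that first applies $f$ entrywise to $\tilde y$ and then feeds the result into $\xhat$. Consequently the nonlinear minimax risk is bounded below by the linear minimax risk, and I can invoke Corollary~\ref{cor:linear lower bound} directly. The assumption $\alpha(K)<C_1$ then gives a lower bound of order $\min(\delta^*,\diam K)/C_1$; the factor $c_2$ enters when one accounts (via the signal-to-noise remark) for the effective restriction $\twonorm{x}\le\nu/c_2$, which caps the relevant diameter at scale $\nu/c_2$.

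For the upper bound, I would apply Theorem~\ref{thm: main rescaled} to obtain
\[
\E \|\lambda \xhat - x\|_2 \le t + \frac{2\lambda}{\sqrt m}\Bigl[\sigma + \eta\,\frac{w_t(K')}{t}\Bigr],
\]
where $\lambda=\twonorm{x}/\mu$ and $K'=\lambda K$. The linear-combinations remark following Lemma~\ref{lem: conditions on f} gives $\lambda(\sigma+\eta)\le\sqrt{k}\,\max_j C_{f_j}\,(\twonorm{x}+\nu)$, and the hypothesis $\nu\ge c_2\twonorm{x}$ absorbs $\twonorm{x}$ into $\nu$ up to a factor depending only on $c_2$. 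Using the scaling identity $w_{\lambda t}(\lambda K)=\lambda\,w_t(K)$, a change of variable in $t$, and the definition of $\delta^*$, produces the claimed upper bound of order $\delta^*$. The $\min$ with $\diam(K)$ is for free since $\xhat\in K$, so the error to any $x\in K$ is automatically at most $\diam(K)$.

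The main obstacle is not mathematical but notational: the scaling factor $\lambda=\twonorm{x}/\mu$ depends on both $f$ and $\twonorm{x}$, and it is precisely Lemma~\ref{lem: conditions on f} that forces the combination $\lambda(\sigma+\eta)$---rather than either factor individually---to be controlled uniformly in $x$. Once this cancellation is recognized, the remaining steps are routine constant-tracking, and no new probabilistic or geometric input is needed beyond what is already stated in the paper.
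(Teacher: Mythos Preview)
Your proposal is correct and follows the same route the paper takes. In the paper this corollary is not given a separate proof; it is presented as a summary of the surrounding discussion in Section~\ref{ssec: linear model}--\ref{ssec: upper vs lower}, namely the data-processing observation ``the non-linearity can only decrease one's ability to estimate $x$'' for the lower bound, together with Theorem~\ref{thm: main rescaled} and the linear-combinations remark after Lemma~\ref{lem: conditions on f} for the upper bound---exactly the pieces you assemble.
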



\subsection{Sets satisfying the Geometric Condition}
\label{ssec: reverse sudakov}

In this section we show that for the three commonly used signal sets discussed in Sections~\ref{ssec: sparse}--\ref{ssec: l1}, the relationship
\begin{equation}\label{eq: geometric condition}
\frac{w_t(K)}{t \sqrt{\log P_t}} \leq C
\end{equation}
holds for all resolutions $t$ (with $t < 1$ in the case of the $\ell_1$-ball), for some universal constant $C$. Thus, $\alpha(K) \leq C$ for these sets. The condition~\eqref{eq: geometric condition} is essentially asserting that \emph{Sudakov's minoration inequality} (see \cite[Theorem 3.18]{LT}) is \emph{reversible} for $(K-K) \cap tB^n_2$ at scale $t/10$. Indeed, applying Sudakov's minoration inequality to the Gaussian process $X_x = \ip{g}{x}$, $g \sim N(0,I_n)$, defined on $(K-K) \cap t B_2^n$ shows $\E \sup X_x = w_t(K) \geq c t \sqrt{\log P_t}$ for some numeric constant $c$. (Note that the packing number of $(K-K) \cap t B^n_2$ is at least as great as the packing number of $K \cap t B^n_2$.)

\medskip

{\bf Sparse vectors.} Let $S_{n,s} \subset \R^n$ be the set of $s$-sparse vectors, i.e., the set of vectors in $\R^n$ with at most $s$ nonzero entries. Note that $S_{n,s}$ is a cone, so if \eqref{eq: geometric condition} is satisfied for $S_{n,s}$ at some level $t$, then it is satisfied at all levels. Thus, without loss of generality we take $t = 1$.

For $K = S_{n,s}$ we have $K-K = S_{n,2s}$. As shown in Section \ref{ssec: sparse},
\[
w^2_1(K) = w^2(S_{n,2s} \cap B_2^n) \leq C s \log\frac{2n}{s},
\]
for some constant $C$. On the other hand, we show that there exists a $1/10$-packing $\XX_1$ of $S_{n,s} \cap B_2$ such that
\[
\log \abs{\XX_1} \geq c\sqrt{s \log\frac{2n}{s}},
\]
for some (possibly different) constant $c$. Such a packing exists when $s > n/4$, because then $S_{n,s}$ contains an $(n/4)$-dimensional unit ball, whose packing number is exponential in $n$. Thus, we may assume $s < n/4$. Consider the set $Q$ of vectors $x$, where each $x \in Q$ has exactly $s$ nonzero coordinates, and the nonzero coordinates are equal to exactly $s^{-1/2}$. Thus, $Q \subset S_{n,s} \cap B_2$, and $\abs{Q} = \binom{n}{s}$. $Q$ itself is not the packing, but we will show that we can pick a large subset $\XX_1$ of $Q$ such that any two vectors $x, y \in \XX_1$ satisfy $\twonorm{x-y} > 1/10$.

Consider picking vectors from $Q$ uniformly at random. For two uniformly chosen vectors $x, y \in Q$, we have
\[
\Pr{\twonorm{x-y}^2 \leq \frac{1}{100}} = \Pr{\text{$x$, $y$ disagree on at most $\frac{s}{100}$ coordinates}}.
\]
This requires $y$ to have at least $\frac{99}{100}s$ of the same nonzero coordinates as $x$. Given $x$, and assuming without loss of generality that $0.01s$ is an integer (rounding will not make a significant effect in the end), this happens for exactly
\[
\binom{s}{0.99s}\binom{n-0.99s}{0.01s} \text{ out of } \binom{n}{s}
\]
values of $y \in Q$. Using Stirling's approximation (and the assumption $s < n/4$),
\[
\Pr{\twonorm{x-y} \leq \frac{1}{10}} \leq \exp\left(-C s \log\frac{2n}{s}\right).
\]
Now let $\XX_1 \subseteq Q$ contain $c \exp(s \log\frac{2n}{s})$ uniformly chosen elements of $Q$. Then with positive probability $\twonorm{x-y} > 1/10$ for all pairs $x, y \in \XX_1$. In particular, there exists at least one $1/10$-packing $\XX_1$ of $S_{n,s} \cap B_2$ of size $c \exp(s \log\frac{2n}{s})$.

\medskip

{\bf Low-rank matrices.}
Let $M_{d_1, d_2,r}$ be the set of $d_1 \times d_2$ matrices with rank (at most) $r$; clearly $r \leq \min(d_1,d_2)$. This is again a cone, so it suffices to show that \eqref{eq: geometric condition} is satisfied at a single level $t = 1$. Now, for $K = M_{d_1, d_2 ,r}$, we have $K-K = M_{d_1, d_2 ,2r}$, so (see \cite[Section 3.3]{PV_IEEE} or Section~\ref{ssec: low-rank} of this paper)
\[
w_1^2(K-K) = w^2(M_{d_1, d_2, 2r}) \cap B^{d_1 d_2}_2) \leq C (d_1 + d_2)r.
\]

For the packing number, note that $M_{d_1, d_2,r}$ contains all matrices whose last $d_2-r$ rows are identically zero, and also all matrices whose last $d_1 - r$ rows are identically zero. Thus, $M_{d_1,d_2,r}$ contains a copy of $\R^{d_1r}$ and a copy of $\R^{d_2r}$. Since the packing number of a unit ball in Euclidean space is exponential in its dimension, we conclude for the packing number of $K$
\[
\log P_1 \geq c \max(d_1,d_2) r \geq \frac{c}{2}(d_1+d_2) r,
\]
as required.

\medskip

{\bf Approximately sparse signals: $\ell_1$-ball.}
For the set of approximately sparse signals contained in the $\ell_1$-ball $B_1^n$, we will show that condition~\eqref{eq: geometric condition} is satisfied at all levels $t < 1$. Note that for $K = B_1^n$, $K-K \subset 2 B_1^n$. We first derive precise bounds on the local mean width of $K$,
\[
w_t^2(B_1^n) \leq w^2(2B_1^n \cap t B_2^n) \leq t^2 w^2(2t^{-1} B_1^n \cap B_2^n).
\]
It is easy to see that, for all $t$, $w_t^2(B_1^n) \leq C'\min(t^2 n, \log n)$. The former will apply when $t$ is very small ($t \leq C n^{-1/2}$); the latter will apply when $t$ is very large ($t \geq c$). In the intermediate regime $C n^{-1/2} < t < c$, let $\sqrt{s} = 2t^{-1}$; then $c^{-2} < s < C^{-2}n$. Assume for simplicity that $s$ is an integer; rounding $s$ will not affect the results substantially. Now, by \cite[Lemma 3.1]{PV_CPAM}
\[
\frac12(\sqrt{s} B_1^n \cap B_2^n) \subset \conv(S_{n,s} \cap B_2^n),
\]
and by \cite[Lemma 2.3]{PV_IEEE},
\[
w^2 \conv(S_{n,s} \cap B_2^n) = w^2(S_{n,s} \cap B_2^n) \leq C' s \log\frac{2n}{s}.
\]
Consequently,
\[
t^{-2} w_t^2(B_1^n) \leq
\begin{cases}
C' n, & t \leq C n^{-1/2};\\
C' t^{-2} \log(nt), & C n^{-1/2} < t < c;\\
C' t^{-2} \log n, & c \leq t \leq 1.
\end{cases}
\]

We now consider packings of $B_1^n \cap t B_2^n$ by balls of radius $t/10$. Clearly, for $t < n^{-1/2}$, the packing number is at least exponential in $n$, because the set contains an $n$-dimensional ball of radius $t$; and for $c < t < 1$ the packing number is at least $2n$, because we can simply pack the vertices. It remains to treat the intermediate regime $n^{-1/2} < t < c$. Note that the problem is equivalent to packing $t^{-1}B_1^n \cap B_2^n$ by balls of radius $1/10$. Let $\sqrt{s} = t^{-1}$; assuming $s$ is an integer, we have $S_{n,s} \subset t^{-1} B_1^n \cap B_2^n$, so we may apply the packing bounds for sparse vectors:
\[
\log \abs{\XX_t} \geq c s \log\frac{2n}{s} = c t^{-2} \log(nt).
\]
It is not hard to put these bounds together to conclude that indeed,
\[
\log \abs{\XX_t} \geq
\begin{cases}
c n, & t \leq C n^{-1/2};\\
c t^{-2} \log(nt), & C n^{-1/2} < t < c;\\
c t^{-2} \log n, & c \leq t \leq 1.
\end{cases}
\]

\section{Related results: Low $M^*$ estimate, Chatterjee's least squares estimate}

\subsection{Low $M^*$ estimate from geometric functional analysis}\label{sec: comparison to M^*}

The function 
$$
t \mapsto \frac{w_t(K)}{t}
$$
that appears in Theorem~\ref{thm: main} has been studied in geometric 
functional analysis. It is known to tightly control  
the {\em diameter of random sections} of $K$. 
An upper bound on the diameter is known as the {\em low $M^*$ estimate}
(see \cite{Milman1,Milman2,Pajor-Tomczak,Gordon})
and lower bounds have been established in \cite{GM1,GM2,GM3,GM4}. 
The following is a simplified version of the low $M^*$ estimate, see \cite{MPT}.

\begin{theorem}[Low $M^*$ estimate]
  There exists an absolute constant $c>0$ such that the following holds. 
  Let $K$ be a star-shaped subset of $\R^n$.
  Let $A$ be an $n \times m$ random matrix whose entries are independent $N(0,1)$ random variables.
  Assume that $t>0$ is such that 
  $$  
  \frac{w_t(K)}{t} \le c \sqrt{m}.
  $$  
  Then with probability at least $1-\exp(-cm)$, we have 
  \begin{equation}         \label{eq: low M*}
  \|u-v\|_2 \le t \quad \text{for any } u, v \in K, \; u - v \in \ker(A).
  \end{equation}
\end{theorem}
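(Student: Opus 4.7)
\medskip

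\textbf{Proof plan.} My strategy is the classical reduction to Gordon's escape-through-a-mesh theorem. I argue by contradiction: suppose there are $u,v\in K$ with $u-v\in\ker(A)$ but $\|u-v\|_2>t$. Star-shapedness of $K$ transfers to $K-K$ around the origin — if $u,v\in K$ and $0\le\lambda\le 1$, then $\lambda u,\lambda v\in K$, whence $\lambda(u-v)\in K-K$. Choosing $\lambda=t/\|u-v\|_2\in(0,1)$ yields $z:=\lambda(u-v)\in K-K$ with $\|z\|_2=t$ and $Az=0$. So it is enough to show that, with the stated probability, no nonzero element of $(K-K)\cap tS^{n-1}$ lies in $\ker(A)$.

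To handle this, I rescale the relevant set to the unit sphere. Set
$$
T \;:=\; \frac{1}{t}\,\bigl((K-K)\cap tS^{n-1}\bigr)\;\subset\;S^{n-1}.
$$
Since $(K-K)\cap tS^{n-1}\subset(K-K)\cap tB_2^n$, and since $K-K=-(K-K)$ forces the Gaussian supremum of $K-K$ (which contains $0$) to be nonnegative, the Gaussian width of $T$ satisfies
$$
w(T) \;=\; \frac{1}{t}\,\E\sup_{z\in(K-K)\cap tS^{n-1}}\ip{g}{z} \;\le\; \frac{w_t(K)}{t} \;\le\; c\sqrt{m},
$$
the final inequality being the hypothesis.

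Now I invoke Gordon's inequality: for a standard Gaussian matrix and any $T\subset S^{n-1}$,
$$
\E\inf_{x\in T}\|Ax\|_2 \;\ge\; \E\|h\|_2 - w(T), \qquad h\sim N(0,I_m),
$$
combined with Gaussian concentration applied to the $1$-Lipschitz map $A\mapsto\inf_{x\in T}\|Ax\|_2$. Taking the absolute constant $c$ in the hypothesis small enough that $w(T)\le\tfrac{1}{2}\E\|h\|_2$, and noting $\E\|h\|_2\asymp\sqrt{m}$, I obtain $\inf_{x\in T}\|Ax\|_2\ge c'\sqrt{m}>0$ with probability at least $1-\exp(-cm)$. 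This contradicts the existence of a nonzero vector in $tT\cap\ker(A)$, which is exactly the reduction performed in the first step, and the proof is complete. The main obstacle is promoting Gordon's expectation estimate to the exponential-probability statement $1-\exp(-cm)$; this relies on Borell–TIS concentration for Gaussian Lipschitz functions. The geometric reduction (scaling via star-shapedness) and the mean-width comparison are elementary.
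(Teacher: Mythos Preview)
The paper does not actually prove this theorem; it is quoted from the literature (the paper writes ``The following is a simplified version of the low $M^*$ estimate, see \cite{MPT}'' and cites the original sources \cite{Milman1,Milman2,Pajor-Tomczak,Gordon}). So there is no in-paper proof to compare against.

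Your argument is correct and is precisely the classical route to the low $M^*$ estimate via Gordon's escape-through-a-mesh theorem, which is the approach in \cite{Gordon} and essentially the one underlying \cite{MPT}. The reduction by star-shapedness is right: $K$ star-shaped implies $K-K$ star-shaped about the origin, so a putative difference $u-v\in\ker(A)$ with $\|u-v\|_2>t$ can be rescaled onto the sphere of radius $t$ while staying in $K-K$ and in $\ker(A)$. The width comparison $w(T)\le w_t(K)/t$ is immediate from $(K-K)\cap tS^{n-1}\subset (K-K)\cap tB_2^n$ and the paper's definition of $w_t$. Gordon's minimax comparison then gives $\E\inf_{x\in T}\|Ax\|_2\ge \E\|h\|_2-w(T)$, and your appeal to Borell--TIS concentration for the $1$-Lipschitz functional $A\mapsto\inf_{x\in T}\|Ax\|_2$ (Lipschitz in the Frobenius norm because $T\subset S^{n-1}$) correctly upgrades this to the $1-\exp(-cm)$ probability. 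The only cosmetic omission is the trivial case $T=\emptyset$, in which the conclusion holds vacuously. (Note also the harmless typo in the paper's statement: $A$ should be $m\times n$, as in the subsequent application.)
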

The conclusion \eqref{eq: low M*} can be interpreted in a geometric way as follows.
All sections of $K$ parallel to the random subspace $E = \ker(A)$ have diameter at most $t$, i.e.
$$
\diam \big( K \cap (E+v) \big) \le t  \quad \text{for all } v \in \R^n.
$$

The relevance of the low $M^*$ estimate to the estimation problems was first observed in \cite{MPT}.
Suppose one wants to estimate a vector $x \in K$ from linear observations as in 
Section~\ref{s: linear observations}, i.e.
$$
y_i = \ip{a_i}{x}, \quad i=1,\ldots,m.
$$
Choose an arbitrary vector $\xhat_0 \in K$ which is consistent with the observations, 
i.e. such that
$$
\ip{a_i}{\xhat_0} = y_i, \quad i=1,\ldots,m.
$$
For convex feasible sets $K$, computing a vector $\xhat_0$ this way 
is an algorithmically tractable convex feasibility problem. 

To bound the error of this estimate, we can apply the low $M^*$ estimate \eqref{eq: low M*} for 
$A$ the $m \times n$ matrix with rows $a_i^\tran$.
Since $x, \xhat_0 \in K$ and $x - \xhat_0 \in \ker(A)$, it follows that with high probability,
$\|\xhat_0 - x\|_2 \le t$. To summarize, 
\begin{equation}         \label{eq: low M* error}
\frac{w_t(K)}{t} \le c \sqrt{m} \quad \text{implies} \quad \|\xhat_0 - x\|_2 \le t \text{ with high probability}.
\end{equation}
In particular, if $K$ is a cone then $w_t(K)/t = w_1(K)$, and we can let $t \to 0$. 
In this case, the low $M^*$ estimate guarantees {\em exact recovery} once 
$w_1(K) \le c \sqrt{m}$. 

To compare \eqref{eq: low M* error} with the error bound of the non-linear estimation,
we can state the conclusion of Theorem~\ref{thm: main} as follows:
$$
\frac{w_t(K)}{t} \le \e \sqrt{m} \quad \text{implies} \quad \|\xhat - x\|_2 \le t + 2 \e + \frac{2\s}{\sqrt{m}} 
\text{ with high probability}.
$$
The two additional terms in the right hand side can be explained by the fact that 
the the exact recovery is impossible in the single-index model \eqref{single-index model}, in particular because of 
the noise and due to the unknown non-linear dependence of $y_i$ on $\ip{a_i}{x}$.

\subsection{Chatterjee's least squares under convex constraint}

As this paper was being written, \cite{Chatterjee} proposed a solution to 
a closely related problem. Suppose that an unknown vector $x \in \R^n$ lies in a known closed convex set $K$. 
We observe the noisy vector 
$$
y = x + g, \quad \text{where} \quad g \in N(0,I_n).
$$
We would like to estimate $x$ from $y$.
This is very similar to the linear estimation problem \eqref{eq: noisy linear observations}, 
which can be written in the form $y = Ax + g$.
An important difference is that in Chatterjee's model, $A$ is the identity matrix; 
so one needs to take $n$ observations of an $n$-dimensional vector $x$ (given as coordinates $y_i$ of $y$). 
In contrast, the current paper assumes that $A$ is an $m \times n$ Gaussian matrix (essentially a projection), 
so one is allowed to take $m$ observations where usually $m \ll n$.

The estimator proposed in \cite{Chatterjee} is the least-squares estimator, which clearly equals 
the metric projection of $y$ onto $K$, that is 
$$
\xhat = P_K(y).
$$
Note that this coincides with the second step of our estimator. (The first step---a linear estimator---is clearly not needed 
in Chatterjee's model where $A$ is identity.)

The performance of the least-squares estimator is quantified in \cite{Chatterjee} using a version of the local mean width
of $K$; this highlights one more similarity with the current paper.

\section{Connections to statistics and econometrics} \label{sec: elena's section}

The single-index model we consider is widely used in econometrics; the monograph of \cite{Horowitz} gives a good introduction to the subject. In this section, we discuss connections and differences between our method and some of the literature. For this section only, we will use notation more common in statistics literature; Table~\ref{table} below summarizes the translation between the two worlds.

\begin{table}[ht]
\caption{Dictionary of notation} \label{table}

\begin{tabular}{l|c|c}
\hline\noalign{\smallskip}
Quantity & Rest of paper & This section\\
\noalign{\smallskip}\hline\noalign{\smallskip}
independent variable & $a_i \in \R^n$ & $X_i \in \R^p$\\
observation & $y_i \in \R$ & $Y_i \in \R$\\
index & $x \in \R^n$ & $\beta \in \R^p$\\
dimension of parameter space & $n$ & $p$\\
number of observations & $m$ & $n$\\
\end{tabular}
\end{table}

We note that much of the work on single-index models considers a broader formulation, namely
\[
Y_i = f(\ip{\beta}{X_i}) + \e_i, \qquad \E[\e_i \vert X_i] = 0.
\]
Our work relies crucially on the additional structural assumption that $\e_i$ and $X_i$ are conditionally independent given $\ip{\beta}{X_i}$.

The single-index model is similar to generalized linear models (GLMs) in regression analysis, and contains many of the widely used regression models; for example, linear, logistic, or Poisson regressions. Our results apply directly to those models, with the added advantage that we do not need to know in advance which of the models is appropriate for a specific dataset.
(Note especially that Corollary~\ref{coroll: summary} guarantees uniform error bounds e.g., for most cubic functions $f$.) 
In addition, Section~\ref{ssec: binary} demonstrates the optimality of our approach for a rather general model of sparse binary-response data, which encompasses the logit or probit models for specific distributions of $\e_i$.

However, we remark that there is no containment between single-index models and GLMs; for example, the negative binomial model is a commonly used GLM which does not fit in the single-index framework. (There is also no reverse containment, as single-index models do not require the distribution of $Y_i$ to come from an exponential family.)

The literature on single-index and more general semiparametric models is large and growing, so we can only outline some of the connections here. The primary appeal of single-index models to statisticians has been that it allows estimation of $\beta$ at the parametric rate ($\sqrt{n}(\hat \beta_n - \beta)$ is tight) as well as estimation of $f$ at the same rate as if the one-dimensional input $\ip{\beta}{X_i}$ were observable. Estimation of an unknown function is notoriously difficult when the domain is high-dimensional, so the latter feature is particularly attractive when $p$ is large. We note that much of the analysis of these results is focused on the asymptotic rates of convergence, although some recent papers \cite{Hristache,Alquier,Tsybakov} also provide nonasymptotic (finite $n$) guarantees.

However, in contexts where the primary concern is with the index $\beta$, the classical approaches (such as \cite{Stoker,Powell,Hristache}) can be unduly restrictive with regards to the link function. This is because they tend to be based on the \emph{average derivative} method, i.e. the observation that
\[
\frac{\del \E[Y_1 \vert X_1 = x]}{\del x} = C \beta.
\]
Naturally, methods based on this idea require the link function $f$ to be (at least) differentiable, whereas we can allow $f(\cdot) = \sign(\cdot)$.

To our knowledge, the only fundamentally different approach to recovery of $\beta$ alone (without $f$) was taken by Li and Duan \cite{Li_Duan}. The authors considered observations $X_i$ with an elliptically symmetric distribution (e.g., correlated normal). They then demonstrated under some mild conditions that any method based on minimization of a convex criterion function (e.g., least squares) would achieve one of its minima at $\beta^* \propto \beta$. Thus, essentially any method for which the minimizer happens to be unique can be used to consistently recover (a multiple of) $\beta$; under some additional assumptions, $\sqrt{n}(\hat \beta_n - \beta)$ is asymptotically normal. The major advantage of this method is that it does not in any way rely on the smoothness of the link function $f$.

The above discussion applies to low-dimensional settings, in which all that is known about $\beta$ is $\beta \in \R^n$ (perhaps with $\twonorm{\beta} = 1$). Recently, a lot of work has been devoted to the analysis of high-dimensional data ($p > n$) with additional structural assumptions about $\beta$, for example sparsity. In the case of linear link function $f$, sparse high-dimensional regression is well-studied (see \cite{vdGeer} and references therein). On the other end of the complexity spectrum, the work of Comminges and Dalayan \cite{Comminges} considers a sparse nonparametric model
\[
y_i = f(X_i) = f_J(\{(X_i)_j: j \in J\}), \quad \abs{J} \leq s
\]
where at most $s$ of the components of the $X_i$ are relevant to the model. They ask the question of when the set $J$ of relevant components can be recovered, and find that the necessary number of measurements is
\[
n \geq C_1 \exp(C_2 s) \log(p/s).
\]
We compare this to our result on recovering a single index $\beta$ belonging to the cone $K$ of $s$-sparse vectors, for which
\[
(w_1(K))^2 \leq s \log (p/s).
\]
We see that going from a single-index to a nonparametric model involves an exponentially larger number of measurements (but the number is exponential in the underlying sparsity, not in the full dimension of the parameter space).

We mention also some of the recent work on the estimation of the link function $f$ in high-dimensional settings. Alquier et al. \cite{Alquier} and Cohen et al. \cite{Cohen} consider the model of a link function $f$ which depends on at most $s$ of the components of $x$. More generally, Dalayan et al. \cite{Tsybakov} allow all link functions that can be decomposed as the sum of at most $m$ ``nice'' functions $f_V$, with each $f_V$ depending on at most $s$ coordinates of $x$. (A special case of all of these models, including ours, is $f(x) = \tilde f(\ip{x}{\beta})$, where $\beta$ $s$-sparse and $\tilde f$ comes from a nice function class.) The recent work of Dalayan et al. \cite{Tsybakov} obtain general bounds on the quality of estimation of $f$, uniformly over classes containing $f_V$.

\subsection{Connection to statistical learning theory}
In statistical (supervised) learning theory, the data is used to
determine a function that can be used for future prediction. Thus the
data $\{a_i, y_i, i = 1, \hdots, m\}$ is used to generate a function which
can predict $y_{m+1}$ given $a_{m+1}$. Such a predicting function is usually
sought in a given function class. A main focus of the research is to
develop oracle inequalities, which state that the predicting function
estimated from the data performs nearly as well as the optimal
function in that class.

By restricting to classes of linear functionals, such results can
(with some work) be specialized and translated to give error bounds on
the estimation of $x$, as in our paper.  We focus on the results of \cite{lecue2013learning} which gives general error bounds under mild conditions. Nevertheless, there are important
differences between these and the results of our paper.  First,
statistical learning theory concentrates on \textit{empirical risk
minimization} \cite{lecue2013learning}, which, when combined with a \textit{squared loss
function}, and specialized to linear functionals, recovers a
generalized version of the Lasso. In contrast, we consider a simpler,
and often more efficient, projection method in this paper.  (We note
in passing that, after writing this paper, the first two authors
studied the behaviour of the Lasso under the single-index model
\cite{plan2015generalized}.) Furthermore, we make especially mild assumptions on the
data.  
In
contrast to \cite{lecue2013learning}, our Theorem \ref{thm: main} does not require
$y_i$ to be sub-Gaussian.  Instead, we roughly only require $y_i$
to have finite second moment so that the parameters $\mu, \sigma$ and
$\eta$ are well defined.  On the other hand, the Lasso can return an exact solution in the noiseless model, whereas the project-based estimator proposed in this paper always has some error attached.  The Lasso can also tolerate the case when the measurement vectors are anisotropic \cite{plan2015generalized}.

We also point to the work of \cite{negahban2012unified} which can also be specialized to linear functionals to give an error bound similar to the one in our paper.  However, the simple projection method espoused in our paper is often more computationally efficient than the optimization programs suggested in \cite{negahban2012unified}.  Further, the conditions on the signal structure are much milder.  Indeed, in contrast to \cite{negahban2012unified}, there is no need for \textit{decomposability} or \textit{restricted strong convexity}.  

Finally, we note that the behavior of the generalized Lasso under the linear model is well studied, and error bounds are known with sharp constants \cite{oymak2013squared}.  Recently, building on the work of \cite{plan2015generalized}, sharp constants have been given for the non-linear model in the asymptotic regime \cite{thrampoulidis2015lasso}.  It is an interesting open question to see if this can also be done in the non-asymptotic regime.  

\section{Discussion}
\label{sec: discussion}

We have analyzed the effectiveness of a simple, projection-based estimator in the semiparametric single index model.  We showed that by projecting onto the set encoding the signal structure, rough, unspecified observations could be combined to give an accurate description of the signal.  The gain from this dimension reduction was described by a geometric parameter---the mean width.  When the noise is large and under mild assumptions on the rest of the model, we showed that the estimator is minimax optimal up to a constant.  We came to the surprising conclusion that an unknown, noninvertible non-linearity often does not significantly effect one's ability to estimate the signal, aside from a loss of scaling information and an extra numerical constant in the error bound.

By comparing to what was known in the classic literature on the semiparametric single-index model, we believe our results a) give a simple method of testing whether estimation is possible based on easily computable parameters of the model, b) allow for non-invertible, discontinuous, and unknown linearities and c) give a careful accounting of the benefit of using a low-dimensional signal structure.  

Nevertheless, our model takes the measurement vectors to be standard normal, and it is important to understand whether this assumption may be generalized, and whether the theory in this paper matches practice.  We discuss this challenge in the rest of this section.  We pause to note that the lower bounds take a very general model for the measurement vectors, which may even be deterministic, thus it is the model used for the upper bounds which requires generalization. 
  
There is a simple key observation that gives the first step to the theory in this paper:  The linear estimator described in Section~\ref{ssec: linear estimation} is an unbiased estimate of the signal, up to scaling.  It may be surprising at first that this holds regardless of a non-linearity in the observations.  This fortunate fact follows from the Gaussianity of the measurements, as described in the next section.    

It is straightforward to generalize the theory to the case $a_i \sim N(0, \Sigma)$, provided $\Sigma$ is known or can be estimated.  One only needs to multiply the linear estimator by $\Sigma^{-1}$ to give an unbiased estimator; as long as $\Sigma$ is well-conditioned, our theory remains essentially unchanged.  The next question is the applicability of other random measurement ensembles.  We see two avenues towards this research:  1) As we illustrated with the matrix completion example of Section \ref{ssec: low-rank}, if the observations are linear, then the linear estimator can be unbiased for non-Gaussian measurement vectors.  All that is needed is for the measurement vectors to be in isotropic position, that is, $\E a_i a_i^* = I_n$.  This is a common assumption in the compressed sensing literature \cite{RIPless}.  Of course, a slight non-linearity can be accounted for by a Taylor series approximation.  2)  A central idea in high-dimensional estimation is that non-Gaussian measurement vectors often give similar behavior to Gaussian measurement vectors.  For example, this is made explicit with the universality phenomenon in compressed sensing.  Such arguments have been applied to a special case of this semiparametric single-index model in \cite{ALPV} focused on binary observations with non-Gaussian measurement vectors.  We believe that similar comparison arguments may be applied in the more general setting of this paper.  

In conclusion, we emphasize that the assumption of Gaussian measurements has allowed a very clean theory, with relatively few assumptions in a general model, and we hope that it can be a first step towards such theory with other models of the measurement vectors.

\section{Proofs of Proposition~\ref{prop: linear estimation} and Theorem~\ref{thm: main}}				\label{s: proofs}	


\subsection{Orthogonal decomposition}

The proof of Proposition~\ref{prop: linear estimation} as well as of our main result, Theorem~\ref{thm: main},
will be based on the orthogonal decomposition of the vectors $a_i$ along the direction of $x$
and the hyperplane $x^\perp$. More precisely, we express
\begin{equation}         \label{eq: orthogonal decomposition}
a_i = \ip{a_i}{\bar{x}} \bar{x} + b_i.
\end{equation}
The vectors $b_i$ defined this way are orthogonal to $x$. 
Let us record a few elementary properties of this orthogonal decomposition.

\begin{lemma}[Properties of the orthogonal decomposition]		\label{lem: orthogonal decomposition}
  The orthogonal decomposition \eqref{eq: orthogonal decomposition}
  and the observations $y_i$ satisfy the following properties:
  \begin{enumerate}[(i)]
    \item $\ip{a_i}{\bar{x}} \sim N(0,1)$;
    \item $b_i \sim N(0, I_{x^\perp})$;
    \item $\ip{a_i}{\bar{x}}$ and $b_i$ are independent;
    \item $y_i$ and $b_i$ are independent.
  \end{enumerate}
\end{lemma}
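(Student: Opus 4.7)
The plan is to verify the four items in order, treating (i)--(iii) as standard Gaussian facts and using the single-index assumption together with (iii) to derive (iv).

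For (i), observe that $\bar{x}$ is a unit vector and $a_i \sim N(0, I_n)$, so $\ip{a_i}{\bar{x}}$ is a centered Gaussian with variance $\bar{x}^\tran I_n \bar{x} = 1$, giving $\ip{a_i}{\bar{x}} \sim N(0,1)$. For (ii), the vector $b_i = a_i - \ip{a_i}{\bar{x}} \bar{x}$ is the orthogonal projection of the standard Gaussian vector $a_i$ onto the hyperplane $x^\perp$, hence is Gaussian with covariance equal to the projection matrix $I_n - \bar{x}\bar{x}^\tran$, which is precisely the identity on $x^\perp$. Alternatively, one can pick an orthonormal basis $\{\bar{x}, e_2, \ldots, e_n\}$ and note that the coordinates of $a_i$ in this basis are i.i.d.\ $N(0,1)$; the first coordinate is $\ip{a_i}{\bar{x}}$ and the rest form $b_i$.

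For (iii), the cleanest way is either to use the rotated basis argument above (in which case $\ip{a_i}{\bar{x}}$ and $b_i$ are disjoint subsets of i.i.d.\ standard Gaussian coordinates), or to compute the cross-covariance $\E[\ip{a_i}{\bar{x}} \, b_i] = \E[\ip{a_i}{\bar{x}} a_i] - \E[\ip{a_i}{\bar{x}}^2]\bar{x} = \bar{x} - \bar{x} = 0$ and invoke the fact that uncorrelated jointly Gaussian vectors are independent.

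Part (iv) is the only step requiring the modelling assumption \eqref{single-index model}, and it is the one where a little care is needed. By that assumption, $y_i$ is conditionally independent of $a_i$ given $\ip{a_i}{x}$, hence also conditionally independent of the $a_i$-measurable vector $b_i$ given $\ip{a_i}{\bar{x}} = \|x\|_2^{-1}\ip{a_i}{x}$. The plan is then to lift this conditional independence to unconditional independence using (iii): for Borel sets $A, B$,
\[
\P{y_i \in A, \, b_i \in B} = \E\bigl[\P{y_i \in A \mid \ip{a_i}{\bar{x}}} \, \P{b_i \in B \mid \ip{a_i}{\bar{x}}}\bigr].
\]
By (iii), $\P{b_i \in B \mid \ip{a_i}{\bar{x}}} = \P{b_i \in B}$ almost surely, and this constant factor pulls out of the expectation, leaving $\P{y_i \in A}\P{b_i \in B}$, which is the desired independence. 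I do not anticipate any real obstacle; the only subtle point, easily missed, is that (iv) is not purely a Gaussian computation and genuinely uses the conditional-independence form of the single-index model together with (iii).
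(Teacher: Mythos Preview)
Your proposal is correct and follows essentially the same approach as the paper: items (i)--(iii) are handled via rotation invariance of the Gaussian, and for (iv) you combine the conditional independence from the single-index model with (iii) to obtain unconditional independence. The paper phrases the last step abstractly as the ``contraction property'' $(Y \perp B) \mid H$ and $B \perp H \Rightarrow Y \perp B$, while you unfold exactly that implication via the tower property; the content is identical.
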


\begin{proof}
Properties (i), (ii) and (iii) follow from the orthogonal decomposition 
and the rotational invariance of the normal distribution. 

Property (iv) follows from a contraction property of conditional independence. 
Let us denote by $Y \perp B$ the independence of random variables (or vectors) $Y$ and $B$, 
and by $(Y \perp B) \mid H$ the conditional independence $Y$ and $B$ given $H$.
The contraction property states that 
$(Y \perp B) \mid H$ and $B \perp H$ imply $Y \perp B$.
In our situation, we have $(y_i \perp b_i) \mid \ip{a_i}{\bar{x}}$
by the assumption on $y_i$ and since $b_i$ is uniquely determined by $a_i$.
Moreover, $b_i \perp \ip{a_i}{\bar{x}}$ by property (iii).
The contraction property yields $y_i \perp b_i$. This proves (iv).
\end{proof}

\subsection{Proof of Proposition~\ref{prop: linear estimation}}

By the identical distribution of $a_i$ and using the 
orthogonal decomposition \eqref{eq: orthogonal decomposition}, 
we have
\begin{equation}         \label{eq: Exlin}
\E \xlin = \E y_1 a_1 = \E y_1 \ip{a_1}{\bar{x}} \bar{x} + \E y_1 b_1.
\end{equation}
The first term in the right hand side equals $\mu \bar{x}$ by definition of $\mu$.
The second term equals zero since by the independence property 
(iv) in Lemma~\ref{lem: orthogonal decomposition} and and since $\E b_1=0$.
We proved the first part of the proposition, $\E \xlin = \mu \bar{x}$.

To prove the second part, we express
$$
\E \|\xlin - \mu \bar{x}\|_2^2 = \E \Big\| \frac{1}{m} \sum_{i=1}^m Z_i \Big\|_2^2,
$$
where $Z_i = y_i a_i - \mu \bar{x}$ are independent and identically distributed
random vectors with zero mean. Thus
\begin{equation}         \label{eq: MSE through Z1}
\E \|\xlin - \mu \bar{x}\|_2^2 = \frac{1}{m^2} \sum_{i=1}^m \E \|Z_i\|_2^2 = \frac{1}{m} \E \|Z_1\|_2^2.
\end{equation}
Using orthogonal decomposition \eqref{eq: orthogonal decomposition} again, 
we can express $Z_1$ as follows:
$$
Z_1 = y_1 \ip{a_1}{\bar{x}} \bar{x} + y_1 b_1 - \mu \bar{x} = X+Y
$$
where 
$$
X = \big[y_1 \ip{a_1}{\bar{x}} - \mu \big] \bar{x}, \quad Y = y_1 b_1.
$$
Note that 
\begin{equation}         \label{eq: XY 0}
\quad \E \ip{X}{Y} = 0. 
\end{equation}
To see this, 
$\ip{X}{Y} = [y_1^2 \ip{a_1}{\bar{x}} - \mu y_1] \cdot \ip{b_1}{\bar{x}}$.
The two terms forming this product are independent by 
properties (iii) and (iv) in Lemma~\ref{lem: orthogonal decomposition}.
Moreover, $\E b_1 = 0$, which yields $\E \ip{b_1}{\bar{x}} = 0$,
and as a consequence, \eqref{eq: XY 0} follows.

Property \eqref{eq: XY 0} implies that   
$$
\E\|Z_1\|_2^2 = \E \|X+Y\|_2^2 = \E \|X\|_2^2 + \E \|Y\|_2^2.
$$
We have $\E\|X\|_2^2=\s^2$ by definition of $\s$ and since $\|\bar{x}\|_2 = 1$.
Next, $\E\|Y\|_2^2 = \eta^2 \E\|b_1\|_2^2$ by the independence property 
(iv) in Lemma~\ref{lem: orthogonal decomposition} 
and by definition of $\eta$.
Recalling that $b_1$ is a standard normal random variable in the hyperplane $x^\perp$, 
we get $\E\|b_1\|_2^2 = n-1$. It follows that
$$
\E\|Z_1\|_2^2 = \s^2 + \eta^2(n-1).
$$
Putting this into \eqref{eq: MSE through Z1}, we complete the proof.
\qed

\subsection{Metric projection and dual norm}

For a subset $T$ of $\R^n$, we define
\begin{equation}         \label{eq: dual norm}
\|x\|_{T^\circ} = \sup_{u \in T} \ip{x}{u}, \quad x \in \R^n.
\end{equation}
It is a semi-norm if $T$ is symmetric. We define also $T_d = (T-T) \cap d B_2^n$.

\begin{lemma}				\label{lem: projection}
  Let $K$ be an arbitrary subset of $\R^n$ and let $z \in K$, $w \in \R^n$.
  Then the distance
  $$
  d := \|P_K(w) - z\|_2
  $$
  satisfies the inequality
  $$
  d^2 \le 2 \|w-z\|_{K_d^\circ} 
  $$
\end{lemma}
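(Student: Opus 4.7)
The plan is to use the defining minimality property of the metric projection: since $\widehat{z} := P_K(w)$ is the closest point in $K$ to $w$, and $z \in K$ is a competitor, we have $\|w - \widehat{z}\|_2 \le \|w - z\|_2$. Squaring both sides and expanding by inserting $\pm z$ inside the first norm should immediately produce a cross term that, after cancellation with $\|w-z\|_2^2$, isolates $\|\widehat{z} - z\|_2^2 = d^2$ on one side and a single inner product on the other.

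Concretely, I would write $w - \widehat{z} = (w - z) + (z - \widehat{z})$ and expand $\|w - \widehat{z}\|_2^2$. The $\|w-z\|_2^2$ terms cancel against the right-hand side of the minimality inequality, leaving
\[
\|\widehat{z} - z\|_2^2 \le 2 \langle w - z,\, \widehat{z} - z \rangle.
\]
At this stage the key observation is that the vector $\widehat{z} - z$ lies in $K - K$ (both $\widehat{z}$ and $z$ are in $K$) and has norm exactly $d$, so it belongs to $K_d = (K - K) \cap d B_2^n$. Bounding the inner product above by the supremum over $u \in K_d$ then gives $d^2 \le 2 \|w - z\|_{K_d^\circ}$ by the definition \eqref{eq: dual norm}, which is the claim.

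There is no real obstacle here; the lemma is essentially a one-line consequence of the first-order optimality of the projection, packaged geometrically through the set $K_d$. The only subtle point is that $K$ is not assumed convex or even closed, so one cannot appeal to the usual variational inequality $\langle w - \widehat{z}, z - \widehat{z} \rangle \le 0$ for convex projections; instead one has to work directly from $\|w - \widehat{z}\|_2 \le \|w - z\|_2$ via the expansion above, which fortunately requires no such structure. One should also be mindful that $P_K(w)$ need not be unique for arbitrary $K$, but the argument is valid for any choice of minimizer, and implicitly assumes such a minimizer exists (which will hold in the intended applications where $K$ is closed).
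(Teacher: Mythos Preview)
Your proposal is correct and follows essentially the same approach as the paper: both use the minimality inequality $\|w - P_K(w)\|_2 \le \|w - z\|_2$, square, expand via $w - P_K(w) = (w - z) + (z - P_K(w))$, cancel $\|w - z\|_2^2$, and then bound the resulting inner product by observing $P_K(w) - z \in (K-K) \cap d B_2^n = K_d$. Your additional remarks on non-convexity and non-uniqueness of the projection are apt but not present in the paper's proof.
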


\begin{proof}
By definition, $P_K(w)$ is the closest vector to $w$ in $K$, so 
$$
\|P_K(w)-w\|_2 \le \|z-w\|_2.
$$
We write this as 
$$
\|(P_K(w)-z) + (z-w)\|_2^2 \le \|z-w\|_2^2, 
$$
expand the left hand side and cancel the terms $\|z-w\|_2^2$ on both sides. 
This gives
\begin{equation}         \label{eq: PKw-z}
\|P_K(w)-z\|_2^2 \le 2 \ip{P_K(w)-z}{w-z}.
\end{equation}
The left hand side of \eqref{eq: PKw-z} equals $d^2$ by definition. 
To estimate the right hand side, note that vectors $P_K(w)$ and $z$ lie in $K$
and they are $d$ apart in Euclidean distance. Therefore 
$P_K(w)-z \in (K-K) \cap d B_2^n = K_d$. 
Thus the right hand side of \eqref{eq: PKw-z} is bounded by 
$2 \|w-z\|_{K_d^\circ}$ as claimed.
\end{proof}

\begin{corollary}					\label{cor: projection}
  Let $K$ be a star-shaped set and let $z \in K$, $w \in \R^n$.
  Then for every $t>0$ we have 
  \begin{equation}         \label{eq: projection}
  \|P_K(w)-z\|_2 \le \max \Big( t, \, \frac{2}{t} \|w-z\|_{K_t^\circ} \Big).
  \end{equation}
\end{corollary}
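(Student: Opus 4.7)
The plan is to reduce the corollary to Lemma~\ref{lem: projection} by a case split on the size of $d := \|P_K(w)-z\|_2$ relative to $t$, using star-shapedness to pass from the scale $d$ appearing in the lemma to the fixed scale $t$ appearing in the statement.

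First I would dispose of the easy case $d \le t$: in that case the right-hand side of \eqref{eq: projection} is at least $t \ge d$, so there is nothing to prove. The substantive case is $d > t$, where I need to bound $d$ by $(2/t)\|w-z\|_{K_t^\circ}$. Lemma~\ref{lem: projection} already delivers $d^2 \le 2\|w-z\|_{K_d^\circ}$, so the job reduces to comparing the two support functionals $\|\cdot\|_{K_d^\circ}$ and $\|\cdot\|_{K_t^\circ}$ on the vector $w-z$.

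The key geometric observation is that, because $K$ is star-shaped, scaling by a factor in $[0,1]$ sends $K$ into itself, and hence $K-K$ into itself as well. Concretely, any $u \in K_d = (K-K)\cap d B_2^n$ can be written as $u = a-b$ with $a,b \in K$ and $\|u\|_2 \le d$; then $(t/d)a$ and $(t/d)b$ lie in $K$ (since $t/d \le 1$), so $(t/d)u \in K-K$, and $\|(t/d)u\|_2 \le t$, giving $(t/d)u \in K_t$. Thus $K_d \subseteq (d/t) K_t$, and taking suprema yields
\[
\|w-z\|_{K_d^\circ} \;\le\; \frac{d}{t}\,\|w-z\|_{K_t^\circ}.
\]

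Combining this with Lemma~\ref{lem: projection} gives $d^2 \le 2(d/t)\|w-z\|_{K_t^\circ}$, and dividing by $d > 0$ yields $d \le (2/t)\|w-z\|_{K_t^\circ}$, as required. There is no real obstacle here; the only thing to keep straight is that star-shapedness is used in exactly the right direction ($(t/d)K \subseteq K$ because $t/d \le 1$), which is why the case $d > t$ is the relevant one to combine with the lemma.
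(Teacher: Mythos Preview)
Your proof is correct and follows essentially the same route as the paper's: the paper phrases the key step as the monotonicity claim that $t \mapsto \frac{1}{t}\|x\|_{K_t^\circ}$ is non-increasing (proved via the inclusion $\frac{1}{t}(K-K) \cap B_2^n \supseteq \frac{1}{d}(K-K) \cap B_2^n$ for $t \le d$), which is exactly the content of your inclusion $K_d \subseteq (d/t)K_t$. The case split and the application of Lemma~\ref{lem: projection} are identical.
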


\begin{proof}
The proof relies on the following fact, which is well known in geometric functional analysis.

\begin{claim}
  For any fixed $x \in \R^n$, the function $\frac{1}{t} \|x\|_{K_t^\circ}$ is non-increasing in $t \in \R_+$.
\end{claim}

To prove this claim, we express the function as
\begin{equation}         \label{eq: 1twKt}
\frac{1}{t} \|x\|_{K_t^\circ} 
= \frac{1}{t} \E \sup_{u \in (K-K) \cap t B_2^n} \ip{x}{u}
= \E \sup_{v \in \frac{1}{t} (K-K) \cap B_2^n} \ip{x}{v}.
\end{equation}
Since $K$ is star-shaped, $K-K$ is star-shaped, too. Then the set $\frac{1}{t}(K-K)$
is non-increasing (with respect to inclusion) in $t \in \R_+$. 
Thus the same is true about $\frac{1}{t}(K-K) \cap B_2^n$. 
This and the identity \eqref{eq: 1twKt} prove the claim.

\medskip

To deduce \eqref{eq: projection}, denote $d = \|P_K(w)-z\|_2$.
If $d \le t$ then \eqref{eq: projection} holds. 
If $d>t$, we apply Lemma~\ref{lem: projection} followed by the claim above. 
We obtain 
$$
d \le \frac{2}{d} \|w-z\|_{K_d^\circ} \le \frac{2}{t} \|w-z\|_{K_t^\circ}.
$$
This implies \eqref{eq: projection}.
\end{proof}

\subsection{Proof of Theorem~\ref{thm: main}.}

\subsubsection{Decomposition of the error}

We apply Corollary~\ref{cor: projection} for $w = \xlin$ and $z = \mu \bar{x}$; 
note that the requirement that $z \in K$ is satisfied by assumption.
We obtain
\begin{equation}         \label{eq: error via dual norm}
\|\xhat - \mu \bar{x}\|_2 
= \|P_K(\xlin) - \mu \bar{x}\|_2
\le t + \frac{2}{t} \|\xlin - \mu \bar{x}\|_{K_t^\circ}.
\end{equation}

Recall that 
$\xlin = \frac{1}{m} \sum_{i=1}^m y_i a_i$ and use the orthogonal decomposition of $a_i$
from \eqref{eq: orthogonal decomposition}. By triangle inequality, we obtain 
\begin{align}         \label{eq: E1+E2}
\|\xlin - \mu \bar{x}\|_{K_t^\circ}
  &= \Big\| \frac{1}{m} \sum_{i=1}^m \big[ y_i \ip{a_i}{\bar{x}} \bar{x} + y_i b_i - \mu \bar{x} \big] \Big\|_{K_t^\circ} \nonumber\\
  &\le \Big\| \frac{1}{m} \sum_{i=1}^m \big[ y_i \ip{a_i}{\bar{x}} - \mu \big] \bar{x} \Big\|_{K_t^\circ}
    + \Big\| \frac{1}{m} \sum_{i=1}^m y_i b_i \Big\|_{K_t^\circ} 
  =: E_1 + E_2.
\end{align}

\subsubsection{Estimating $E_1$}

Denoting $\xi_i = y_i \ip{a_i}{\bar{x}} - \mu$, we have
$$
E_1 = \Big| \frac{1}{m} \sum_{i=1}^m \xi_i \Big| \cdot \|\bar{x}\|_{K_t^\circ}.
$$
By definition, $K_t \subseteq t B_2^n$ and $\bar{x} \in B_2^n$, so $\|\bar{x}\|_{K_t^\circ} \le t$.
Further, $\xi_i$ are independent and identically distributed random variables with zero mean. 
Therefore
\begin{equation}         \label{eq: E1}
\E E_1^2 \le \frac{1}{m} \E[\xi_1^2] \cdot t^2 = \frac{\s^2 t^2}{m},
\end{equation}
where the last equality follows by the definition of $\xi_1$ above and the definition of 
$\sigma$ in \eqref{eq: mu sigma eta}.

\subsubsection{Estimating $E_2$}

We will estimate
$$
\E E_2 = \E \|h\|_{K_t^\circ} 
\quad \text{where} \quad 
h := \frac{1}{m} \sum_{i=1}^m y_i b_i.
$$
Recall that $y_i$ and $b_i$ are independent by property (iv) in Lemma~\ref{lem: orthogonal decomposition}. 
Let us condition on $y_i$. 
Rotation invariance of the normal distribution implies that
$$
h \sim N(0, s I_{x^\perp})
\quad \text{where} \quad 
s^2 =\frac{1}{m^2} \sum_{i=1}^m y_i^2.
$$ 
In order to extend the support of the distribution of $h$ from $x^\perp$ to $\R^n$, 
we recall the following elementary and well known fact. 

\begin{claim}
  For a subspace $E$ of $\R^n$, let $g_E$ denote a random vector with distribution $N(0,I_E)$.
  Let $\Phi : \R^n \to \R$ be a convex function. Then 
  $$
  \E \Phi(g_E) \le \E \Phi(g_F) \quad \text{whenever} \quad E \subseteq F.
  $$
\end{claim}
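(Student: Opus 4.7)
The plan is to reduce the claim to an application of Jensen's inequality by exploiting the orthogonal decomposition of the Gaussian on the larger subspace. Concretely, since $E \subseteq F$, I can write $F = E \oplus E'$ where $E' = F \cap E^\perp$ is the orthogonal complement of $E$ inside $F$. Then by rotational invariance of the standard Gaussian together with the orthogonality of $E$ and $E'$, the random vector $g_F \sim N(0, I_F)$ admits the distributional identity
\[
g_F \stackrel{d}{=} g_E + g_{E'},
\]
where $g_E \sim N(0, I_E)$ and $g_{E'} \sim N(0, I_{E'})$ are independent.

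With this decomposition in hand, the next step is to condition on $g_E$ and integrate out $g_{E'}$. Since $\Phi$ is convex and $\E g_{E'} = 0$, Jensen's inequality (applied to the conditional expectation) yields
\[
\E \bigl[ \Phi(g_E + g_{E'}) \,\big|\, g_E \bigr] \;\ge\; \Phi\bigl( g_E + \E g_{E'} \bigr) \;=\; \Phi(g_E).
\]
Taking expectations over $g_E$ and using the tower property then gives $\E \Phi(g_F) \ge \E \Phi(g_E)$, which is the desired inequality.

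There is essentially no hard step here; the only thing to be careful about is the measure-theoretic setup needed to apply Jensen to a conditional expectation, which is standard once $g_F$ has been written as an independent sum. One should also note that no integrability hypothesis is stated in the claim as written, so implicitly $\Phi(g_F)$ must be assumed integrable (or one interprets both sides as extended-real-valued expectations of lower semicontinuous convex functions, in which case Jensen still applies). With those caveats, the proof is complete in two short lines: decompose orthogonally, then apply Jensen.
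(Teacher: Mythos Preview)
Your proof is correct and is essentially identical to the paper's own argument: the paper also writes $F = E \oplus L$ orthogonally, expresses $g_F \stackrel{d}{=} g_E + g_L$ with $g_L$ independent of $g_E$, and applies Jensen's inequality using $\E g_L = 0$.
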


To prove the claim, consider the orthogonal decomposition $F = E \oplus L$ for an appropriate
subspace $L$. Then $g_F = g_E + g_L$ in distribution, where $g_L \sim N(0,I_L)$ is independent of $g_E$.
Jensen's inequality yields
$$
\E \Phi(g_E) = \E \Phi(g_E + \E g_L) \le \E \Phi(g_E + g_L) = \E \Phi(g_F)
$$
as claimed. 

\medskip

In the notation of the claim, we can estimate the expectation of $\|h\|_{K_t^\circ}$ 
(still conditionally on the $y_i$'s) as follows:
$$
\E \|h\|_{K_t^\circ} 
= \E \|s g_{x^\perp}\|_{K_t^\circ}
= s \E \|g_{x^\perp}\|_{K_t^\circ}
\le s \E \|g_{\R^n}\|_{K_t^\circ}
= s \cdot w_t(K)
$$
where the last equality follows by the definitions of the dual norm and local mean width. 
Therefore, unconditionally we have 
\begin{align}				\label{eq: E2}
\E E_2 
  &= \E \|h\|_{K_t^\circ} 
  = \E[s] \cdot w_t(K)
  \le (\E s^2)^{1/2} \cdot w_t(K) \nonumber\\
  &= \big[ \frac{1}{m} \E y_1^2 \big]^{1/2} \cdot w_t(K)
  = \frac{\eta}{\sqrt{m}} \cdot w_t(K)
\end{align}
where the last equality follows from the definition of 
$\eta$ in \eqref{eq: mu sigma eta}.

It remains to plug estimates \eqref{eq: E1} for $E_1$ and \eqref{eq: E2} for $E_2$
into \eqref{eq: E1+E2}, and we obtain 
$$
\E \|\xlin - \mu \bar{x}\|_{K_t^\circ}
= \E(E_1+E_2)
\le (\E E_1^2)^{1/2} + \E E_2
\le \frac{1}{\sqrt{m}} \big[ \s t + \eta \, w_t(K) \big].
$$
Finally, substituting this estimate into \eqref{eq: error via dual norm} and taking 
expectation of both sides completes the proof of Theorem~\ref{thm: main}.
\qed

\section{High-probability version of Theorem~\ref{thm: main}}					\label{s: thm: main whp}

Using standard concentration inequalities, we can complement Theorem~\ref{thm: main} 
with a similar result that is valid with high probability rather than in expectation. 

To this end, we will assume that the observations $y_i$ have {\em sub-gaussian}
distributions. This is usually expressed by requiring that 
\begin{equation}         \label{eq: sub-gaussian}
\E \exp(y_i^2/\psi^2) \le 2 \quad \text{for some } \psi > 0.
\end{equation}
Basic facts about sub-gaussian random variables can be found e.g. in \cite{V}.  

To understand this assumption, consider the case when the non-linearity is given by an explicit function, i.e., $y_i = f(\langle a_i, x \rangle)$.  Since $\langle a_i, x \rangle$ is Gaussian, $y_i$ will be sub-Gaussian provided that $f$ does not grow faster than linearly, i.e., provided that $f(x) \leq a + b |x|$ for some scalars $a$ and $b$.

\begin{theorem}[Non-linear estimation with high probability]				\label{thm: main whp}
  Let $\bar{x} = x/\|x\|_2$.
  Assume that $\mu \bar{x} \in K$ where $K$ is a fixed star-shaped closed subset of $\R^n$.
  Assume that observations $y_i$ satisfy the sub-gaussian bound \eqref{eq: sub-gaussian}.
  Let $t > 0$ and $0 < s \le \sqrt{m}$.
  Then the non-linear estimator $\xhat$ defined in \eqref{eq: xhat} satisfies
  $$
  \|\xhat - \mu \bar{x}\|_2 \le t + \frac{4 \eta}{\sqrt{m}} \Big[ s + \frac{w_t(K)}{t} \Big]
  $$
  with probability at least $1 - 2\exp(-c s^2 \eta^4 / \psi^4)$. 
  Here $\mu$ and $\eta$ are the numbers defined in \eqref{eq: mu sigma eta},
  and $c>0$ is an absolute constant.
\end{theorem}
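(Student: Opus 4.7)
\textbf{Proof proposal for Theorem~\ref{thm: main whp}.}

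The plan is to follow the same error decomposition used in the proof of Theorem~\ref{thm: main} and then concentrate each piece. By Corollary~\ref{cor: projection} and the triangle inequality,
\[
\|\xhat - \mu \bar{x}\|_2 \;\le\; t + \frac{2}{t}\,\|\xlin - \mu\bar{x}\|_{K_t^\circ}
\;\le\; t + \frac{2}{t}\,(E_1 + E_2),
\]
with $E_1$ and $E_2$ defined exactly as in \eqref{eq: E1+E2}. So the task reduces to showing that with the required probability,
\[
E_1 + E_2 \;\le\; \frac{2\eta t}{\sqrt{m}}\Big[s + \frac{w_t(K)}{t}\Big].
\]

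First, I would handle $E_1$. Writing $\xi_i = y_i\ip{a_i}{\bar{x}} - \mu$, one has $E_1 = |m^{-1}\sum \xi_i|\cdot\|\bar{x}\|_{K_t^\circ} \le t\,|m^{-1}\sum \xi_i|$. Each $\xi_i$ is a centered product of the sub-gaussian $y_i$ (norm $\lesssim \psi$) and the standard Gaussian $\ip{a_i}{\bar x}$, so it is sub-exponential with norm $\lesssim \psi$. Bernstein's inequality for sub-exponential sums (see \cite{V}) then gives
\[
\P\!\left\{\Big|\frac{1}{m}\sum_{i=1}^m \xi_i\Big| > \frac{\eta s}{\sqrt{m}}\right\}
\;\le\; 2\exp(-c s^2 \eta^2/\psi^2)
\]
for $s \le \sqrt{m}$ (so the Gaussian tail dominates), which controls $E_1$ by $\eta t s/\sqrt{m}$.

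Next, for $E_2 = \|h\|_{K_t^\circ}$ with $h = m^{-1}\sum y_i b_i$, I would argue conditionally on $(y_1,\dots,y_m)$. As in the expectation proof, independence of $y_i$ and $b_i$ and rotation invariance give $h \sim N(0, s_m^2 I_{x^\perp})$ with $s_m^2 = m^{-2}\sum y_i^2$. The map $v \mapsto \|v\|_{K_t^\circ}$ is Lipschitz on $\R^n$ with constant $\sup_{u\in K_t}\|u\|_2 \le t$, so Gaussian concentration yields, conditionally,
\[
\P\!\left\{\|h\|_{K_t^\circ} > s_m\, w_t(K) + s_m t\, s \,\Big|\, y\right\} \;\le\; \exp(-cs^2),
\]
using $\E[\|h\|_{K_t^\circ}\mid y] \le s_m w_t(K)$ from the expectation proof. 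Separately, $\sum y_i^2$ is a sum of sub-exponential random variables with expectation $m\eta^2$ and sub-exponential norm $\lesssim \psi^2$, so Bernstein again gives $s_m^2 \le 2\eta^2/m$ with probability at least $1 - 2\exp(-c m \eta^4/\psi^4)$. Combining these bounds on the intersection event yields $E_2 \lesssim \eta w_t(K)/\sqrt{m} + \eta t s/\sqrt{m}$.

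A union bound over the three failure events, together with $s \le \sqrt m$ so that each tail is at most $2\exp(-cs^2\eta^4/\psi^4)$, delivers the claimed high-probability bound. The main technical obstacle I foresee is not any single step but bookkeeping the various sub-gaussian/sub-exponential constants so that they all consolidate into the single factor $\eta^4/\psi^4$ in the exponent; in particular the $E_1$ tail naturally comes out with $\eta^2/\psi^2$, while the $s_m^2$ concentration produces $\eta^4/\psi^4$, and the weaker of the two must be chosen to state a unified bound. After that, choosing constants so that the final coefficient is exactly $4$ rather than some larger absolute constant is just arithmetic.
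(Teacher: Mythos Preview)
Your proposal is correct and follows essentially the same route as the paper: the same $E_1+E_2$ decomposition, Bernstein for the sub-exponential $\xi_i$, Bernstein for $\sum y_i^2$, and Gaussian concentration for $\|h\|_{K_t^\circ}$ with Lipschitz constant $t$, then a union bound consolidated via $\eta\lesssim\psi$ and $s\le\sqrt m$ into the single exponent $cs^2\eta^4/\psi^4$. The only cosmetic difference is that the paper factors $h=s_m g$ with $g$ standard Gaussian and concentrates $\|g\|_{K_t^\circ}$ directly, whereas you phrase the same step as conditioning on $y$; the resulting bounds and the final bookkeeping are identical.
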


\begin{proof}
The conclusion follows by combining the proof of Theorem~\ref{thm: main} 
and standard concentration techniques which can be found in \cite{V}. 
So we will only outline the argument. Let $\e = s/(2\sqrt{m})$; then $\e<1/2$ by assumption.

First we bound $E_1$. Since $y_i$ are sub-gaussian as in \eqref{eq: sub-gaussian} 
and $\ip{a_i}{\bar{x}}$ is $N(0,1)$, the random variables $\xi_i$ are sub-exponential. More
precisely, we have $\E \exp(\xi_i / C\psi) \le 2$, where $C>0$ denotes an absolute constant here and thereafter.
A Bernstein-type inequality (see \cite[Corollary 5.17]{V}) implies that 
\begin{equation}         \label{eq: E1 whp}
\Big| \frac{1}{m} \sum_{i=1}^m \xi_i \Big| \le \e \eta, \quad \text{and thus} \quad E_1 \le \e \eta t,
\end{equation}
with probability at least 
\begin{equation}         \label{eq: prob E1}
1 - 2 \exp \Big[ -c \min \Big( \frac{\e^2 \eta^2}{\psi^2}, \, \frac{\e \eta}{\psi} \Big) m \Big]
\ge 1 - 2 \exp \Big( -\frac{c m \e^2 \eta^2}{\psi^2} \Big).
\end{equation}
In the last inequality we used that $\eta = \E y_i^2 \le C \psi$ by definition of $\psi$ 
and that $\e \le 1$ by assumption. 

Turning to $E_2$, we need to bound $\sum_{i=1}^m y_i^2$ and $\|g\|_{K_t^\circ}$.
A similar application of a Bernstein-like inequality for the sub-exponential random variables $y_i^2 - \eta^2$
shows that
$$
\sum_{i=1}^m y_i^2 \le 4 \eta^2 m
$$
with probability at least 
\begin{equation}         \label{eq: prob E2 y}
1 - 2 \exp \Big[ -c \min \Big( \frac{\eta^4}{\psi^4}, \, \frac{\eta^2}{\psi^2} \Big) m \Big]
\ge 1 - 2 \exp \Big(- \frac{c m \eta^4}{\psi^4} \Big).
\end{equation}
Next, we bound $\|g\|_{K_t^\circ}$ using Gaussian concentration. 
Since $K_t \subseteq t B_2^n$, the function $x \mapsto \|x\|_{K_t^\circ}$ on $\R^n$ 
has Lipschitz norm at most $t$. Therefore, the Gaussian concentration inequality 
(see e.g. \cite{Ledoux}) implies that 
$$
\|g\|_{K_t^\circ} \le \E \|g\|_{K_t^\circ} + t \e \sqrt{m} = w_t(K) + t \e \sqrt{m} 
$$
with probability at least 
\begin{equation}         \label{eq: prob E2 b}
1 - \exp(-c \e^2 m).
\end{equation}
If both $\sum_{i=1}^m y_i^2$ and $\|g\|_{K_t^\circ}$ are bounded as above, then 
$$
E_2 \le \frac{1}{m} \Big( \sum_{i=1}^m y_i^2 \Big)^{1/2} \, \|g\|_{K_t^\circ}
\le \frac{2 \eta}{\sqrt{m}} \big( w_t(K) + t \e \sqrt{m} \big).
$$

If also $E_1$ is bounded as in \eqref{eq: E1 whp}, then we conclude 
as in the proof of Theorem~\ref{thm: main} that
$$
\|\xhat - \mu \bar{x}\|_2 \le t + \frac{2}{t} (E_1 + E_2)
\le t + \frac{4 \eta}{\sqrt{m}} \cdot \frac{w_t(K)}{t} + 6 \eta \e.
$$
Recalling \eqref{eq: prob E1}, \eqref{eq: prob E2 y} and \eqref{eq: prob E2 b}, we see 
that this happens with probability at least
$$
1 - 2 \exp \Big( -\frac{c m \e^2 \eta^2}{\psi^2} \Big) 
  - 2 \exp \Big(- \frac{c m \eta^4}{\psi^4} \Big)
  - \exp(-c \e^2 m)
\ge 1 - 2 \exp \Big( - \frac{c' m \e^2 \eta^4}{\psi^4} \Big)
$$
for an appropriate absolute constant $c'>0$. (Here we used again that $\eta/\psi<1$ and $\e < 1$.)
The conclusion of Theorem~\ref{thm: main whp} follows by definition of $\e$. 
\end{proof}

\section{Proofs of Theorem \ref{thm:linear lower bound}, Corollary \ref{cor:linear lower bound}, and Lemma \ref{lem: conditions on f}.}
\label{sec: optimality proofs}

In this section, for a set $K \subset \R^n$ we use the notation $K_t = K \cap t B_2^n$. (Note that we do not symmetrize $K$ first.)

\subsection{Proof of Theorem \ref{thm:linear lower bound}}
The theorem is proven with a careful application of Fano's inequality, an information theoretic method which is useful for determining minimax estimation rates.  We state a version of Fano's inequality synthesized for lower-bounding expected error in the linear model (the steps needed to specialize Fano's inequality can be seen in  \cite{RWY_Minimax,CD_lower_bound}, for example).  For the general version see \cite{information_theory}.  
In all that follows, $\XX_t$ is a $t/10$-packing of $K_t$ with maximal cardinality.  Thus, $|\XX_t| = P_t$.

\begin{theorem}[Fano's Inequality for expected error]
\label{thm: fano}
Consider a fixed matrix $A$ in the linear model of Section~\ref{ssec: linear model}, and let $t > 0$.  Suppose that
\begin{equation}
\label{eq:Fano req}
\frac{1}{\nu^2 |\XX_t|^2} \sum_{\overset{w \neq v}{w,v \in \XX_t}} \twonorm{A (w - v)}^2 + \log(2) \leq \frac{1}{2} \log(|\XX_t|).
\end{equation}
Then there exists a constant $c > 0$ such that for any estimator $\xhat(A, y)$,
\[\sup_{x \in \XX_t} \E \twonorm{\xhat - x} \geq c t.\]
\end{theorem}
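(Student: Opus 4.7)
The plan is to derive this Fano-type bound via the standard three-move reduction: testing $\rightarrow$ Fano $\rightarrow$ mutual-information bound, with a Markov-style conversion at the end to pass from error probability to expected error. I will argue contrapositively: suppose $\sup_{x\in\XX_t}\E\twonorm{\xhat - x} < ct$ for some small absolute constant $c$, and derive a contradiction with Fano's inequality.

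First, I will reduce estimation to hypothesis testing. Given any estimator $\xhat$, define the associated test $\hat v := \arg\min_{w\in\XX_t}\twonorm{\xhat - w}$, with ties broken arbitrarily. Because $\XX_t$ is a $t/10$-packing, the triangle inequality shows that whenever $\twonorm{\xhat - x} < t/20$ for the true $x \in \XX_t$, every other $w\in\XX_t$ satisfies $\twonorm{\xhat - w} \ge t/10 - t/20 = t/20$, so $\hat v = x$. Combined with Markov's inequality, the assumption $\E\twonorm{\xhat-x} < ct$ for every $x\in\XX_t$ yields $\Pr{\hat v \ne x} \le 20c$ for every such $x$. Averaging over a uniform prior $V$ on $\XX_t$ gives the testing error bound $P_e := \Pr{\hat v \ne V} \le 20c$.

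Second, I will lower-bound $P_e$ using Fano's inequality in the form
\[
P_e \ge 1 - \frac{I(V;Y) + \log 2}{\log|\XX_t|},
\]
where $Y = AV + \e$ with $\e \sim N(0,\nu^2 I_m)$ and $A$ is fixed. For the mutual information, I will use the standard convexity trick: denoting by $P_v$ the conditional law $N(Av,\nu^2 I_m)$ and by $\bar P = |\XX_t|^{-1}\sum_v P_v$ the marginal, convexity of KL divergence in its second argument gives $D_{\mathrm{KL}}(P_v\|\bar P) \le |\XX_t|^{-1}\sum_w D_{\mathrm{KL}}(P_v\|P_w)$, hence
\[
I(V;Y) = \frac{1}{|\XX_t|}\sum_v D_{\mathrm{KL}}(P_v\|\bar P) \le \frac{1}{|\XX_t|^2}\sum_{v,w} D_{\mathrm{KL}}(P_v\|P_w) = \frac{1}{2\nu^2|\XX_t|^2}\sum_{v\ne w}\twonorm{A(v-w)}^2,
\]
where the final equality uses the closed-form KL between two equal-covariance Gaussians, $D_{\mathrm{KL}}(P_v\|P_w) = \twonorm{A(v-w)}^2/(2\nu^2)$.

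Third, I will combine these pieces. The hypothesis \eqref{eq:Fano req} is twice this mutual-information bound plus $\log 2$, so it yields $2\,I(V;Y) + \log 2 \le \tfrac{1}{2}\log|\XX_t|$, and thus $I(V;Y) + \log 2 \le \tfrac{1}{4}\log|\XX_t| + \tfrac{1}{2}\log 2$. (Note that the condition \eqref{eq:Fano req} forces $|\XX_t| \ge 4$ even when the pairwise distances vanish, so the bound is nontrivial.) Plugging into Fano yields $P_e \ge \tfrac{3}{4} - \tfrac{\log 2}{2\log|\XX_t|} \ge \tfrac{1}{2}$. Comparing with $P_e \le 20c$ forces $c \ge 1/40$, which is the desired contradiction and proves the theorem with $c = 1/40$.

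The only step that requires care is the mutual-information bound: one must remember to use convexity of KL in the second slot (giving the $|\XX_t|^{-2}$ pairwise average rather than the weaker maximum), and to double-check that the factor of $2$ between the paper's hypothesis and the $(2\nu^2)^{-1}$ in the Gaussian KL is absorbed into the slack that produces $P_e \ge 1/2$ rather than a sharper constant. Apart from this bookkeeping, every step is standard, and the Markov-based conversion to expected error loses only a universal factor.
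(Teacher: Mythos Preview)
Your argument is correct and follows exactly the standard three-step reduction (estimation $\to$ testing via nearest-neighbor decoding and Markov, testing $\to$ Fano, Fano $\to$ pairwise-KL bound via convexity). The arithmetic with the factor of $2$ and the bound $|\XX_t|\ge 4$ is handled cleanly, and the constant $c=1/40$ emerges as claimed.

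As for comparison: the paper does not actually supply its own proof of this statement. It presents Theorem~\ref{thm: fano} as a known tool, writing ``the steps needed to specialize Fano's inequality can be seen in \cite{RWY_Minimax,CD_lower_bound}'' and referring to \cite{information_theory} for the general form. Your write-up is precisely the synthesis those references would yield, so there is nothing substantively different to contrast---you have filled in what the paper left to the cited literature.
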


Theorem \ref{thm:linear lower bound} will be proven by conditioning on $A$ and applying Fano's inequality.  We begin to control the left-hand side of \eqref{eq:Fano req} by showing that $A$ does not significantly increase the average distance between points in our packing.  Indeed, 
\[\E \sum_{\overset{w \neq v}{w,v \in \XX_t}} \twonorm{A (w - v)}^2 \leq \|\E A^* A\|  \sum_{\overset{w \neq v}{w,v \in \XX_t}} \twonorm{(w - v)}^2 \leq m \sum_{\overset{w \neq v}{w,v \in \XX_t}} \twonorm{(w - v)}^2 \leq  4 m |\XX_t|^2 t^2. \]
The last step follows since $\XX_t \subset t B_2^n$.  Thus, by Markov's inequality, 
\begin{equation}
\label{eq: KL bound}
\sum_{\overset{w \neq v}{w,v \in \XX_t}} \twonorm{A (w - v)}^2 \leq 16 m |\XX_t|^2 t^2 \qquad \text{ with probability at least } 3/4.
\end{equation}
We pause to note that the above inequality is the only property of $A$ that is needed.

By conditioning on the good event that the Inequality \eqref{eq: KL bound} holds, the following Corollary gives a significant simplification of Theorem \ref{thm: fano}.  
 
\begin{corollary}[Simplified Fano Bound]
Consider the linear model of Section~\ref{ssec: linear model}, suppose $\|\E A A^*\| \leq m$, and let $t > 0$.  Set $\bar{\nu} := \nu/\sqrt{m}$.  Suppose that
\begin{equation}
\label{eq: reduced Fano}
\frac{t^2}{\bar{\nu}^2} + 1 \leq c \log(|\XX_t|)
\end{equation}
for a small enough constant $c$.
Then there exists a constant $c' > 0$ such that for any estimator $\xhat(A, y)$,
\[\sup_{x \in \XX_t} \E \twonorm{\xhat - x} \geq c' t.\]
\end{corollary}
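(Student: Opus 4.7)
\medskip
\noindent\textbf{Proof plan for the Simplified Fano Bound.} The strategy is to condition on the good event for $A$ established by \eqref{eq: KL bound}, verify the Fano hypothesis \eqref{eq:Fano req} on that event by invoking the assumption \eqref{eq: reduced Fano}, apply Theorem~\ref{thm: fano} conditionally, and then average back over $A$.

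\medskip
\noindent\emph{Step 1: The good event for $A$.} Since $\|\E A^*A\| \le m$ (which is $\frac{1}{m}\|\E A^*A\|\le 1$ in the remark's notation) and $\XX_t \subseteq tB_2^n$ gives $\twonorm{w-v}^2 \le 4t^2$, Markov's inequality yields exactly the bound \eqref{eq: KL bound}: the event
\[
G := \Bigl\{\textstyle\sum_{w\ne v}\twonorm{A(w-v)}^2 \le 16\,m\,|\XX_t|^2\,t^2\Bigr\}
\]
has probability at least $3/4$. This is the only distributional input about $A$ we use.

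\medskip
\noindent\emph{Step 2: Verifying the Fano hypothesis on $G$.} On $G$, the left-hand side of \eqref{eq:Fano req} is bounded by
\[
\frac{16\,m\,t^2}{\nu^2} + \log 2 \;=\; \frac{16\,t^2}{\bar\nu^2} + \log 2 \;\le\; 16\Bigl(\frac{t^2}{\bar\nu^2}+1\Bigr) \;\le\; 16\,c\,\log|\XX_t| \;\le\; \tfrac12\log|\XX_t|
\]
as soon as the absolute constant $c$ in \eqref{eq: reduced Fano} is chosen to be at most $1/32$. Hence Theorem~\ref{thm: fano} applies pointwise for every realization of $A$ in $G$.

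\medskip
\noindent\emph{Step 3: Applying Fano conditionally and averaging over $A$.} The main technical point, and the step requiring care, is swapping the supremum over $x$ with the expectation over $A$: the worst-case index $x$ produced by Theorem~\ref{thm: fano} may depend on $A$, so we cannot directly combine the conditional conclusions. The standard resolution is to use the Bayesian (average-risk) form of Fano's inequality that underlies Theorem~\ref{thm: fano}: what the argument actually shows on $G$ is the stronger statement that, for every estimator $\xhat$,
\[
\frac{1}{|\XX_t|}\sum_{x\in\XX_t}\E_{y\mid A}\twonorm{\xhat - x} \;\ge\; c_0\,t
\]
for an absolute constant $c_0>0$. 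Taking expectation over $A$ and using $\sup \ge \text{avg}$,
\[
\sup_{x\in\XX_t}\E\twonorm{\xhat-x} \;\ge\; \E_A\Bigl[\tfrac{1}{|\XX_t|}\textstyle\sum_x \E_{y\mid A}\twonorm{\xhat-x}\Bigr] \;\ge\; \E_A\bigl[\mathbf{1}_G\cdot c_0 t\bigr] \;\ge\; \tfrac{3}{4}c_0 t,
\]
which gives the conclusion with $c' = 3c_0/4$. The main obstacle is exactly this max/expectation interchange; it disappears once we recognize Fano as bounding the uniform-prior Bayes risk rather than merely the sup risk.
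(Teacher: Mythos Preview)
Your proposal is correct and matches the paper's approach: the paper does not spell out a proof of this corollary beyond the sentence ``By conditioning on the good event that the Inequality~\eqref{eq: KL bound} holds, the following Corollary gives a significant simplification of Theorem~\ref{thm: fano},'' and your Steps~1--2 are exactly that computation. Your Step~3, which carefully handles the interchange of $\sup_x$ with $\E_A$ by passing through the uniform-prior Bayes-risk form of Fano, supplies a point the paper leaves entirely implicit; it is the right resolution and is more careful than the paper itself.
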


We are now in position to prove Theorem \ref{thm:linear lower bound}.  
\begin{proof}

Let $c_0$ be a small numerical constant, whose value will be chosen at the end of the proof, and let
\begin{equation}
\label{eq: define delta}
\delta := c_0 \delta_* = c_0 \inf_t \big\{ t + \bar{\nu}\Big[1 + \sqrt{\log P_t}\Big]\big\}.
\end{equation}

We split the proof into two cases, depending on the relative size of $\delta$ and $\bar{\nu}$.

\noindent{\bf Case 1: }  $\delta \geq \bar{\nu}$.

This case contains the meat of the proof, but nevertheless it can be proven quite quickly with the tools we have gathered.  Since $\delta$ is the infimum in \eqref{eq: define delta}, it satisfies
\[\delta \leq c_0 \left(\delta + \bar{\nu} \Big[1 + \sqrt{\log P_\delta}\Big]\right).\]
Further, $\bar{\nu} \leq \delta$ in Case 1.  Thus, we may massage the equation to give
\[\delta \leq c_1 \bar{\nu}\sqrt{\log P_\delta}\]
where $c_1 = c_0/(1 - 2 c_0)$ and we take $c_0 < 1/2$.

We now check that $\delta$ satisfies Inequality \eqref{eq: reduced Fano} as required for Fano's inequality.  Since $\d \geq \bar{\nu}$, one has
\[\frac{\d^2}{\bar{\nu}^2} + 1 \leq 2 \frac{\d^2}{\bar{\nu}^2} \leq 2 c_1^2 \log P_\delta.\]
The inequality now follows if we take $c_0$ such that $2 c_1^2 < c$.

We now move to the second case.

\noindent{\bf Case 2:} $\delta \le \bar{\nu}$.
This is an unusual case; it only occurs when $K$ is quite small, e.g., a 1-dimensional subspace.
Fano's inequality, which is more applicable for higher dimensional sets, may not give the optimal answer, but we may use much simpler methods.  We give a bare-bones argument. 

Consider any two signals, $w, v \in K$ satisfying $\twonorm{w - v} \leq c \bar{\nu}$.  We will show that these two points are indistinguishable based on the noisy observations $y$.  Consider the hypothesis testing problem of determining which of the two points generate $y$, i.e., either $y = A w + \e$ or $y = A v + \e$.  As seen from \eqref{eq: KL bound}, with probability at least $3/4$, $\twonorm{A(v - w)} \leq c \sqrt{m} \bar{\nu} \leq c \nu$.  Thus, the difference between the two candidate signals is much smaller than the noise level.  It follows from a quick analysis of the \textit{likelihood ratio test}, which is optimal by \textit{Neyman-Pearson lemma}, that the  that hypotheses are indistinguishable, i.e., there is no estimator which has a more than $3/4$ chance of determining the original signal.  This gives the lower bound
\[\sup_{x \in K} \E \twonorm{\xhat - x} \geq c \twonorm{v - w}.\]
By taking the maximum distance between signals $v, w \in K \cap \bar{\nu} B_2^n$, this gives
\[\sup_{x \in K} \E \twonorm{\xhat - x} \geq c\min(\diam(K), \bar{\nu}) \geq c \min(\diam(K), \delta).\]
\end{proof}

\subsection{Proof of Corollary \ref{cor:linear lower bound} incorporating Remark \ref{rem: alpha at scale}}

We prove a generalization of Corollary \ref{cor:linear lower bound} that takes the tighter version of $\alpha$ from Remark \ref{rem: alpha at scale}.

\begin{corollary}
Assume that $x \in K$ where $K$ is a star-shaped subset of $\R^n$.  Assume that the observations $y_i$ are given by \ref{eq: linear model Gaussian noise}.  Let 
\begin{equation}
\delta^* :=  \inf_t \big\{ t + \frac{\nu}{\sqrt{m}}\Big[1 + \frac{w_t(K)}{t}\Big]\big\}.
\end{equation}
Then any estimator $\xhat$ satisfies
\[
\sup_{x \in K} \E \twonorm{\xhat - x} \geq c_\alpha \min(\delta^*, \diam(K)),
\]
where $c_\alpha = c/\alpha$ for a numerical constant $c$ and $\alpha$ defined in \eqref{eq: alpha at scale}.
\end{corollary}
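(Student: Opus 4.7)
The plan is to reduce the claim to Theorem~\ref{thm:linear lower bound} by establishing the comparison $\delta_* \geq c \delta^*/\alpha$, where $\delta_*$ is the ``packing-number infimum'' appearing in Theorem~\ref{thm:linear lower bound} and $\delta^*$ is the ``mean-width infimum'' in the corollary statement. Since the scale-dependent $\alpha$ of \eqref{eq: alpha at scale} supplies information at only the single scale $t = \delta^*_2 = \delta^*/2$, the uniform-in-$t$ comparison used for the original Corollary~\ref{cor:linear lower bound} is unavailable; instead the strategy is to bound $t + \bar{\nu}(1 + \sqrt{\log P_t}) \geq c \delta^*/\alpha$ pointwise in $t$ and then take the infimum. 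Throughout, set $\bar{\nu} = \nu/\sqrt{m}$, and assume without loss of generality $\alpha \geq 1$: Sudakov's minoration, as recalled in Section~\ref{ssec: reverse sudakov}, bounds $\alpha$ below by an absolute constant which can be absorbed into the final numerical constant.

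The first concrete step is to extract a structural inequality by plugging $t = \delta^*_2$ into the infimum defining $\delta^*$: after rearranging the resulting bound $\delta^* \leq \delta^*_2 + \bar{\nu}(1 + w_{\delta^*_2}(K)/\delta^*_2)$, one obtains $\delta^*/2 \leq \bar{\nu} + \bar{\nu}\, w_{\delta^*_2}(K)/\delta^*_2$. This splits into two scenarios: either $\bar{\nu} \geq \delta^*/4 \geq \delta^*/(4\alpha)$, in which case the term $\bar{\nu}$ alone already exceeds the target; or $\bar{\nu}\, w_{\delta^*_2}(K)/\delta^*_2 \geq \delta^*/4$, whence dividing by $\alpha$ and invoking the definition of $\alpha$ at scale $\delta^*_2$ yields $\bar{\nu} \sqrt{\log P_{\delta^*_2}} \geq \delta^*/(4\alpha)$.

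The uniform pointwise bound on $t + \bar{\nu}(1+\sqrt{\log P_t})$ then splits into ranges of $t$. For $t \geq \delta^*/\alpha$ the bound is immediate. For $t \leq \delta^*_2$, the star-shaped property of $K$ enters: scaling a maximal $(\delta^*_2/10)$-packing of $K \cap \delta^*_2 B_2^n$ by the factor $t/\delta^*_2 \in (0,1]$ produces a valid $(t/10)$-packing of $K \cap t B_2^n$ of the same cardinality, so $P_t \geq P_{\delta^*_2}$ and the second sub-case from the previous step propagates to scale $t$. The residual band $\delta^*_2 < t < \delta^*/\alpha$, which is non-empty only if $\alpha < 2$, is handled by $t > \delta^*/2 \geq \delta^*/(2\alpha)$.

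Taking the infimum over $t$ delivers $\delta_* \geq c \delta^*/\alpha$; feeding this into Theorem~\ref{thm:linear lower bound} and performing routine bookkeeping to move the factor $1/\alpha$ inside the $\min(\cdot, \diam(K))$ expression (legitimate because $\alpha \geq 1$) yields the conclusion. The main obstacle to watch is the direction of the star-shaped scaling: one must \emph{shrink} a packing from scale $\delta^*_2$ down to scale $t$, not attempt to dilate from a smaller scale upward, so the argument only transfers packing information toward smaller $t$. Everything else reduces to the definitions and to Theorem~\ref{thm:linear lower bound} itself.
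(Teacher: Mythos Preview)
Your proof is correct and follows essentially the same approach as the paper's: both arguments reduce to Theorem~\ref{thm:linear lower bound} by establishing $\delta_* \geq c\,\delta^*/\alpha$, via (i) plugging $t=\delta^*_2$ into the infimum defining $\delta^*$, (ii) invoking the definition of $\alpha$ at scale $\delta^*_2$, and (iii) propagating the packing bound to all $t\le\delta^*_2$ using the star-shaped monotonicity $P_t\ge P_{\delta^*_2}$. The only cosmetic differences are that you split explicitly into the sub-cases ``$\bar\nu$ large'' versus ``$\bar\nu\,w_{\delta^*_2}(K)/\delta^*_2$ large'' and treat the three $t$-ranges separately, whereas the paper keeps the two contributions together inside $\bar\nu[1+\sqrt{\log P_{\delta^*_2}}]$ and then disposes of large $t$ by the dichotomy $\delta^*_2 \lessgtr \delta_*$; neither device buys anything the other does not.
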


\begin{proof}
The proof proceeds simply by showing that $\delta^*_2 = \frac{1}{2} \delta^* \leq \alpha \delta_*$, with $\delta_*$ defined in \eqref{eq: def delta}.  One may then apply Theorem \ref{thm:linear lower bound}.  First, observe that by definition
\[\delta^*_2 \leq \frac{1}{2}\left(\delta^*_2 + \frac{\nu}{\sqrt{m}}\Big[1 + \frac{w_{\delta^*_2}(K)}{\delta^*_2}\Big]\right).\]
Massage the equation to give
\[\delta^*_2 \leq \frac{\nu}{\sqrt{m}}\Big[1 + \frac{w_{\delta^*_2}(K)}{\delta^*_2}\Big].\]
The Geometric Assumption then implies that 
\[\delta^*_2 \leq \frac{\nu}{\sqrt{m}}\Big[1 + \alpha \sqrt{\log P_{\delta^*_2}}\Big] \leq \alpha \cdot \frac{\nu}{\sqrt{m}}\Big[1 + \sqrt{\log P_{\delta^*_2}}\Big].\]
Note that since $K$ is star-shaped, $P_t$ is decreasing in $t$ and thus,
\[\delta^*_2/\alpha \leq \inf_{t \leq \delta^*_2} \frac{\nu}{\sqrt{m}}\Big[1 + \sqrt{\log P_{\delta^*_2}}\Big] \leq \inf_{t \leq \delta^*_2} \{t + \frac{\nu}{\sqrt{m}}\Big[1 + \sqrt{\log P_{\delta^*_2}}\Big]\}.\]
Now, if $\delta^*_2 < \delta_*$, then the proof is done, so we may restrict to the case $\delta^*_2 \geq \delta_*$.  However, in this case, by definition of $\delta_*$, the restriction $t \leq \delta^*_2$ may be removed from the infimum just above, and so
\[\delta^*_2/\alpha \leq \inf_t \{t + \frac{\nu}{\sqrt{m}}\Big[1 + \sqrt{\log P_{\delta^*_2}}\Big]\} = \delta_*.\]
\end{proof}

\subsection{Proof of Lemma \ref{lem: conditions on f}}
\begin{proof}[Proof of Lemma \ref{lem: conditions on f}]  
The proof will follow from several manipulations and decompositions of Gaussian random variables.  By definition, in the nonlinear model
\[\mu = \E f(\ip{a_1}{x} + \e_1) \ip{a_1}{\bar{x}} = \E f(\twonorm{x} \cdot g + \nu z) g\]
where $g, z \sim N(0,1)$ are independent.  Let $\beta^2 = \Var(\twonorm{x} \cdot g + \nu z) = \twonorm{x}^2 + \eta^2$ and let
\[w := \frac{\twonorm{x} \cdot g + \nu z}{\beta} \sim N(0,1).\]
Further, decompose $g$ as $g = (\twonorm{x}/\beta)  w + w_\perp$ where $w_\perp$ is independent of $w$.  Thus, 
\[\mu \beta/\twonorm{x} = \E f(\beta w) w = \E \left[f(\beta \abs{w}) \cdot \abs{w}\right].\]
The second equality follows since $f$ is assumed to be odd.  Now, since $f$ is nondecreasing, we have 
\[
\E f(\beta \abs{w}) \cdot \abs{w} \geq f(\beta) \cdot \Pr{\abs{w} \geq 1} = C f(\beta),
\]
where $C = \Pr{\abs{w} \geq 1} \approx 0.683$.

Putting pieces together, we have
\[\mu \geq C\frac{\twonorm{x} f(\beta)}{\beta}.\]
We now give an upper bound for $\sigma$ and $\eta$. 
\[\sigma^2 = \Var(f(\beta w) \cdot g) \leq \E (f(\beta w) \cdot g)^2.\]
By Cauchy-Schwarz inequality, the right-hand side is bounded by
\begin{equation}
\label{eq: bound sigma}
 \sqrt{\E f^4(\beta w)} \cdot \sqrt{ \E g^4} = 3^{1/2} \sqrt{\E f^4(\beta w)}.
 \end{equation}
The sub-multiplicative assumption implies that
\[\E f^4(\beta w) \leq f^4(\beta) \E f^4 (w) = C_f \cdot f^4(\beta).\]
Thus, $\sigma \leq C_f f(\beta)$.  As a bonus, we bounded $\eta$ by the same quantity, since $\eta^2$ is bounded by the right-hand side of \eqref{eq: bound sigma}.  Thus, 
\[ \lambda (\sigma + \eta) = \frac{\twonorm{x}}{\mu} (\sigma + \eta) \leq \frac{\twonorm{x} f(\beta) \beta}{\twonorm{x}} C_f f(\beta) \leq C_f (\twonorm{x} + \eta),\]
where
\[
C_f = C \E[f^4(w)]^{1/4}, \quad C = 48^{1/4}\Pr{\abs{w}\geq 1} \approx 1.8.
\]

\end{proof}

\bibliographystyle{abbrv}
\bibliography{pvy-high-dim-estimation-geometric}

\end{document}